\newtheorem{thm}{Theorem}[section]
\newtheorem{cor}[thm]{Corollary}
\newtheorem{lem}[thm]{Lemma}
\newtheorem{exm}[thm]{Example}
\newtheorem{pro}[thm]{Proposition}
\theoremstyle{definition}
\newtheorem{dfn}[thm]{Definition}
\theoremstyle{remark}
\newtheorem{rem}[thm]{\bf Remark}
\numberwithin{equation}{section}
\DeclareMathOperator{\projdim}{\mathrm{proj.dim}}
\DeclareMathOperator{\injdim}{\mathrm{inj.dim}}
\DeclareMathOperator{\Hom}{\mathrm{Hom}}
\DeclareMathOperator{\Ext}{\mathrm{Ext}}
\DeclareMathOperator{\K}{\mathbb{K}}
\DeclareMathOperator{\Ker}{\mathrm{Ker}}
\DeclareMathOperator{\sgn}{\mathrm{sgn}}
\DeclareMathOperator{\proj}{\mathrm{proj}}
\begin{document}
\title{The preprojective algebra of a finite EI quiver}
\author{Dongdong Hu}
\subjclass[2020]{16G20, 16E30, 16D90, 18A05}
\date{\today}

\thanks{E-mail: hudd@mail.ustc.edu.cn}
\keywords{EI quiver, preprojective algebra, tensor algebra, Cartan matrix}

\maketitle

\dedicatory{}%
\commby{}%

\begin{abstract}
	We define the preprojective algebra of a finite EI quiver. We prove that it is isomorphic to a centain tensor algebra. For a finite EI quiver of Cartan type, we prove that the corresponding preprojective algebra is isomorphic to the generalized preprojective algebra.
\end{abstract}
\maketitle
\section{Introduction}
Let $\K$ be a field and $\mathcal{C}$ a finite EI category. Here, the EI assumption means that each endomorphism in $\mathcal{C}$ is an isomorphism. Denote by $\mathbb{K}\mathcal{C}$ its category algebra. We mention that category algebras play an important
role in the representation theory of finite groups; see \cite{We07,We08}. 

 Following \cite{L2011}, a \emph{finite EI quiver} $(Q,X)$ is a pair consisting of a finite  quiver $Q$ and an assignment $X=(X(i),X(\alpha))_{i\in Q_0,\alpha\in Q_1}$. More precisely, for each vertex $i\in Q_0$, $X(i)$ is a finite group, and for each arrow $\alpha\in Q_1$, $X(\alpha)$ is a finite non-empty $(X(t(\alpha)),X(s(\alpha)))$-biset. Each finite acyclic EI quiver gives rise a finite EI category $\mathcal{C}(Q,X)$, and a finite EI category is \emph{free} if it is isomorphic to $\mathcal{C}(Q,X)$ for some finite acyclic EI quiver $(Q,X)$.
 
  For a finite quiver $Q$, we have the preprojective algebra $\Pi(Q)$ \cite{GP,Rin}. We mention that the finite EI quiver generalizes the finite quiver.
So a natural question arises: how do we define the corresponding preprojective algebra of a finite EI quiver? 

Inspired by \cite{GLS, Geu2017}, we define the preprojective algebra $\Pi(Q,X)$ for a finite EI quiver $(Q,X)$; see Definition~\ref{Def2.2}. We prove that it is isomorphic to a centain tensor algebra in the following main result, which generalizes \cite[Theorem~A]{Rin}.
\vskip 5pt

\noindent {\bf Theorem\;A}\;(=\;Theorem~\ref{thm4.3}).\;\emph{Let $(Q, X)$ be a finite acyclic EI quiver with the assignment $X$ action-free. Set $\mathcal{C}=\mathcal{C}(Q,X)$ and $\mathbb{K}\mathcal{C}$ to be its the category algebra. Then the preprojective algebra $\Pi(Q,X)$ is isomorphic to the tensor algebra of the $\mathbb{K}\mathcal{C}$-$\mathbb{K}\mathcal{C}$-bimodule $\Ext_{\mathbb{K}\mathcal{C}}^1(D(\mathbb{K}\mathcal{C}),\mathbb{K}\mathcal{C})$.
}
\vskip 5pt

Here, an \emph{action-free} assignment $X$ means that the left $X(t(\alpha))$-action and right $X(s(\alpha))$-action on $X(\alpha)$ are both free for each $\alpha\in Q_1$. For a $\K$-vector space $V$, let $D(V)=\Hom_{\K}(V,\K)$ be the usual $\K$-dual of $V$.

Let $(C,D,\Omega)$ be a Cartan triple, that is, $C$ is a symmetrizable Cartan matrix, $D$ its symmetrizer and $\Omega$ an orientation of $C$. Two algebras $H(C,D,\Omega)$ and the generalized preprojective $\Pi(C,D,\Omega)$ are introduced in \cite{GLS}. The algebra $H(C,D,\Omega)$ generalizes the path algebra $\mathbb{K}Q$ and $\Pi(C,D,\Omega)$ generalizes the preprojective algebra $\Pi(Q)$. These provide connections between preprojective algebras, Lie algebras and cluster algebras from the symmetric case to the symmetrizable case.

 We mention the work of \cite{CW}, which associates a free EI category $\mathcal{C} = \mathcal{C}(C, D, \Omega)$ to each Cartan triple $(C, D, \Omega)$. Moreover, \cite{CW} establishes an algebra isomorphism between $\mathbb{K}\mathcal{C}$ and $H(C', D', \Omega')$ for another Cartan triple $(C',D',\Omega')$; see Section~\ref{sec6}. Based on \cite{CW}, we have the following result.

\vskip 5pt

\noindent {\bf Theorem\;B}\;(=\;Theorem~\ref{pro6.2}).\;\emph{Assume that $(C, D, \Omega)$ is a Cartan triple and that $\mathbb{K}$ has enough roots of unity for $D$. Let $(Q^\circ,X)$ be the EI quiver associated to the Cartan triple $(C,D,\Omega)$.  Then the preprojective algebra $\Pi(Q^\circ,X)$ is isomorphic to the generalized preprojective algebra $\Pi(C',D',\Omega').$
}
\vskip 5pt

The special case in Theorem~B is that the field $\mathbb{K}$ has characteristic $p$ and the entries of $D$ are $p$-power; see Corollary~\ref{cor6.3}. In this case, we have $(C',D',\Omega')=(C,D,\Omega)$. Then we have the preprojective algebra $\Pi(Q^\circ,X)$ is isomorphic to the generalized preprojective algebra $\Pi(C,D,\Omega).$

The paper is structured as follows. In Section \ref{sec2}, we recall some basic facts on free EI categories and define the preprojective algebra of finite EI quivers. We introduce the representation of finite EI quivers in Section \ref{sec3}. In Section~\ref{sec4}, we prove Proposition~\ref{pro4.2}, which is central to the proof of the main result. The main result Theorem \ref{thm4.3} is proved in Section~\ref{sec5}. In Section \ref{sec6}, we recall some algebras associated to Cartan matrices and prove $\Pi(Q^\circ,X)\cong\Pi(C',D',\Omega')$, where $(Q^\circ,X)$ is the EI quiver associated to the Cartan triple $(C,D,\Omega)$.

Given an algebra $A$ over a field $\mathbb{K}$, we write $A$-$\mathrm{mod}$ for the category of finite-dimensional left $A$-modules, and $A$-$\proj$ for the category of finite-dimensional projective left $A$-modules. For a left $A$-module $M$, we denote by $\projdim(M)$ (\emph{resp}. $\injdim(M)$) the projective dimension (\emph{resp.} injective dimension) of $M$. 
\section{Finite EI quivers and preprojective algebras}\label{sec2}
In this section, we introduce the preprojection algebra of a finite acyclic EI quiver; see Definition~\ref{Def2.2}. We prove that it is isomorphic to a tensor algebra of a certain bimodule over a category algebra; see Proposition~\ref{pro2.6}.

\subsection{The preprojective algebra}
Let $\K$ be a field and $\mathcal{C}$ be a category with finitely many objects. Denote by $\mathrm{Mor}(\mathcal{C})$ (\emph{resp}. $\mathrm{Obj}(\mathcal{C})$) the set of morphisms (\emph{resp}.  objects) in $\mathcal{C}$. The \emph{category algebra} $\mathbb{K}\mathcal{C}$ of $\mathcal{C}$ over $\K$ is defined as follows: $\mathbb{K}\mathcal{C}=\bigoplus_{\alpha\in\mathrm{Mor}(\mathcal{C})}\mathbb{K}\alpha$ as a $\K$-vector space and the product is induced by composition of morphisms:
\[\alpha\beta=\begin{cases}
	\alpha\circ\beta,& \mbox{if}\ \alpha\ \mbox{and}\ \beta\ \mbox{can be composed in}\ \mathcal{C};\\
	0,& \mbox{otherwise}.
\end{cases}\]
The unit of $\mathbb{K}\mathcal{C}$ is $1_{\mathbb{K}\mathcal{C}}=\sum\limits_{x\in\mathrm{Obj}(\mathcal{C})}e_x$, where $e_x$ denotes the identity endomorphism of $x$.

The category $\mathcal{C}$ is \emph{finite} if $\mathrm{Mor}(\mathcal{C})$ is a finite set. A finite category $\mathcal{C}$ is said to be EI \cite{L89} provided that every endomorphism is an isomorphism. Therefore, for each object $x$, $\mathrm{Hom}_\mathcal{C}(x,x)=\mathrm{Aut}_\mathcal{C}(x)$ is a finite group.

For a group $G$, denote by $\mathbb{K}G$ its group algebra. Let $G_1$ and $G_2$ be two groups. Recall that a $(G_1,G_2)$-\emph{biset} is a set $X$ with a left $G_1$-action and a right $G_2$-action such that $(g_1x)g_2=g_1(xg_2)$ for all $g_1\in G_1,x\in X,g_2\in G_2.$ For a $(G_1,G_2)$-biset $X$, we denote by $\mathbb{K}X$ the $\K$-vector space with a $\K$-basis $X$. Then $\mathbb{K}X$ is naturally a $\mathbb{K}G_1$-$\mathbb{K}G_2$-bimodule.

Let $X$ be a $(G_1,G_2)$-biset. Define its \emph{dual biset} $X^\ast=\{x^\ast|x\in X\}$ as follows:
\[x^\ast.g_1=(g_1^{-1}x)^\ast,\ \ g_2.x^\ast=(xg_2^{-1})^\ast,\mbox{ for } g_1\in G_1,g_2\in G_2,x\in X. \]
Then $X^\ast$ is a $(G_2,G_1)$-biset. We have an isomorphism of $(G_1,G_2)$-bisets
\begin{equation}\label{equ2.1}(X^\ast)^\ast\cong X\end{equation}
Then we can identify the bisets $(X^\ast)^\ast$ with $X$ via $(\ref{equ2.1})$. We have a natural isomorphism of $\mathbb{K}G_2$-$\mathbb{K}G_1$-bimodules
 \begin{equation}\label{iso1}\mathrm{Hom}_{\K}(\mathbb{K}X,\mathbb{K})\cong \mathbb{K}(X^\ast).\end{equation}
 Then we may identify the spaces $\mathrm{Hom}_{\K}(\mathbb{K}X,\mathbb{K})$ with $\mathbb{K}(X^\ast)$ via $(\ref{iso1})$.

Let $X$ be a $(G_1,G_2)$-biset and $Y$ be a $(G_2,G_3)$-biset. The \emph{biset product} of $X$ and $Y$, denoted by $X\times_{G_2} Y$, is the set $X\times Y/\sim$ with the equivalence relation $(x,gy)\sim(xg,y)$ for $x\in X,g\in G_2,y\in Y.$ For brevity, the equivalence class of $(x,y)$ for $x\in X,y\in Y$ is still denoted by $(x,y)$. The set $X\times_{G_2}Y$ is naturally a $(G_1,G_3)$-biset.

Recall that a finite quiver $Q=(Q_0,Q_1,s,t)$ is a quadruple consisting of a finite set $Q_0$ of vertices, a finite set $Q_1$ of arrows, and two maps $s,t:Q_1\rightarrow Q_0$. We write $s(\alpha)$ and $t(\alpha)$ for the source and target of an arrow $\alpha\in Q_1$. A path $p=\alpha_n\cdots \alpha_2\alpha_1$ of length $n\ge 2$ consists of arrows $\alpha_i$ such that $t(\alpha_i)=s(\alpha_{i+1})$ for all $1\le i< n$. Here, we write the concatenation from the right to the left, and set $s(p)=s(\alpha_1),\ t(p)=t(\alpha_n)$. The paths of lengths zero and one coincide with the elements of $Q_0$ and $Q_1$, respectively. The trival path, denoted by $e_i$, is the path of length zero whose source and target are the vertex $i\in Q_0$. A quiver is \emph{acyclic}, if it does not contain a non-trivial path $p$ such that $s(p)=t(p)$. Let $Q_n$ be the set of paths of length $n$, and denote by $\mathbb{K}Q=\bigoplus_{n\ge 0}\mathbb{K}Q_n$ the path algebra, whose multiplication is induced by
the concatenation of paths.

Recall from \cite[Definition 2.1]{L2011} that a \emph{finite EI quiver} $(Q,X)$ is a pair consisting of a finite  quiver $Q$ and an assignment $X=(X(i),X(\alpha))_{i\in Q_0,\alpha\in Q_1}$. More precisely, for each vertex $i\in Q_0$, $X(i)$ is a finite group, and for each arrow $\alpha\in Q_1$, $X(\alpha)$ is a finite non-empty $(X(t(\alpha)),X(s(\alpha)))$-biset. A finite EI quiver $(Q,X)$ is called \emph{acyclic}, if the quiver $Q$ is acyclic. For a path $p=\alpha_n\cdots\alpha_2\alpha_1$ of length $n$, we define 
\[X(p)=X(\alpha_n)\times_{X(t(\alpha_{n-1}))}X(\alpha_{n-1})\times_{X(t(\alpha_{n-2}))}\cdots\times_{X(t(\alpha_2))}X(\alpha_2)\times_{X(t(\alpha_1))}X(\alpha_1).\]
Then $X(p)$ is naturally a $(X(t(p)),X(s(p)))$-biset. Indeed, for two paths $p,q$ satisfying $s(p)=t(q)$, we have a natural isomorphism
\begin{equation}\label{iso}\begin{tikzcd}
		X(p)\times_{X(t(q))}X(q) \arrow[r, "\sim"] & X(pq)
\end{tikzcd}\end{equation}
where $pq$ denotes the concatenation. We identify $X(e_i)$ with $X(i)$.

Each finite EI quiver $(Q,X)$ determines a category $\mathcal{C}=\mathcal{C}(Q,X)$; see \cite[Section~2]{L2011}. The objects of $\mathcal{C}$ are the vertices of $Q$; for two objects $x$ and $y$, we have the disjoint union
\[\mathrm{Hom}_\mathcal{C}(x,y)=\bigsqcup\limits_{\{p\ \mbox{path in}\ Q\ \mbox{with}\ s(p)=x\ \mbox{and}\ t(p)=y\}}X(p).\]
The composition of morphisms in $\mathcal{C}$ is induced by the concatenation of paths and the isomorphism (\ref{iso}). We observe that $\mathcal{C}$ is a finite EI category if and only if the quiver $Q$ is acyclic. A finite EI category $\mathcal{C}$ is called \emph{free}, if it is isomorphic to $\mathcal{C}(Q,X)$ for some finite acyclic EI quiver $(Q,X)$; see \cite[Definition~2.2]{L2011}.

Let $A$ be a $\K$-algebra, and let $V$ be an $A$-$A$-bimodule, where $\K$ acts on $V$ centrally. Denote by $$T_A(V)=\K\oplus V\oplus V^{\otimes_A 2}\oplus\cdots \oplus V^{\otimes_A n}\cdots $$ the thesor algebra. The following result is proved in \cite{CW} for the acyclic case, whose proof clearly applies to the general case.

\begin{pro}\label{CWpro1.1}$($\cite[Proposition~2.2]{CW}$)$
	Let $\mathcal{C}=\mathcal{C}(Q,X)$ be the category associated to a finite EI quiver $(Q,X)$. Set $A=\prod_{x\in Q_0}\mathbb{K}X(x)$ and $V=\bigoplus_{\alpha\in Q_1}\mathbb{K}X(\alpha)$, where $V$ is naturally an $A$-$A$-bimodule. Then there is an algebra isomorphism $T_A(V)\rightarrow\mathbb{K}\mathcal{C}$ such that its restriction to $A\oplus V$ is the identity.
\end{pro}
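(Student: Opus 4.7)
The plan is to build the map $\phi\colon T_A(V)\to\mathbb{K}\mathcal{C}$ via the universal property of the tensor algebra, and then identify each homogeneous component $V^{\otimes_A n}$ with the span of the morphisms arising from paths of length $n$.

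First I would observe that the inclusion $A=\prod_{x\in Q_0}\mathbb{K}X(x)\hookrightarrow\mathbb{K}\mathcal{C}$ (sending $\mathbb{K}X(x)$ to the subalgebra spanned by the endomorphisms at the vertex $x$) is an algebra map, and that the inclusion $V=\bigoplus_{\alpha\in Q_1}\mathbb{K}X(\alpha)\hookrightarrow\mathbb{K}\mathcal{C}$ is an $A$-$A$-bimodule map, because the bimodule structure on $\mathbb{K}X(\alpha)$ is precisely the composition in $\mathcal{C}$ with the endomorphism groups $X(t(\alpha))$ on the left and $X(s(\alpha))$ on the right. The universal property of the tensor algebra then delivers a unique $\K$-algebra homomorphism $\phi\colon T_A(V)\to\mathbb{K}\mathcal{C}$ whose restriction to $A\oplus V$ is the identity.

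The core computation is to analyse $V^{\otimes_A n}$ for $n\ge 1$. Since $A$ is a product of group algebras indexed by $Q_0$, the tensor product over $A$ is diagonal: a simple tensor $a_n\otimes\cdots\otimes a_1$ with $a_i\in\mathbb{K}X(\alpha_i)$ vanishes unless $s(\alpha_{i+1})=t(\alpha_i)$ for all $i$, and in that case the internal group algebra that acts is $\mathbb{K}X(t(\alpha_i))$. Using the well-known identification $\mathbb{K}Y\otimes_{\mathbb{K}G}\mathbb{K}Z\cong \mathbb{K}(Y\times_G Z)$ for bisets $Y,Z$, followed by iterating the isomorphism (\ref{iso}), I obtain
\[
V^{\otimes_A n}\cong\bigoplus_{p=\alpha_n\cdots\alpha_1\in Q_n}\mathbb{K}X(\alpha_n)\otimes_{\mathbb{K}X(t(\alpha_{n-1}))}\cdots\otimes_{\mathbb{K}X(t(\alpha_1))}\mathbb{K}X(\alpha_1)\cong\bigoplus_{p\in Q_n}\mathbb{K}X(p).
\]
Combining these identifications with the decomposition $\mathrm{Mor}(\mathcal{C})=\bigsqcup_p X(p)$ (where $p$ ranges over all paths of $Q$, including the trivial ones $e_i$ which are identified with the vertex groups $X(i)$) yields, on the level of $\K$-vector spaces,
\[
T_A(V)=A\oplus\bigoplus_{n\ge 1}V^{\otimes_A n}\cong\bigoplus_{x\in Q_0}\mathbb{K}X(x)\oplus\bigoplus_{n\ge 1}\bigoplus_{p\in Q_n}\mathbb{K}X(p)=\mathbb{K}\mathcal{C}.
\]
It remains to check that $\phi$ realises precisely this vector-space isomorphism; this is immediate because $\phi$ sends a simple tensor to the composite in $\mathcal{C}$, and composition in $\mathcal{C}$ was defined using exactly the isomorphism (\ref{iso}) that underlies the identification $V^{\otimes_A n}\cong\bigoplus_{p\in Q_n}\mathbb{K}X(p)$.

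The main obstacle is the bookkeeping: one must verify that the relations imposed by tensoring over the (non-commutative) group algebra $\mathbb{K}X(t(\alpha_i))$ agree on the nose with the equivalence relation $(x,gy)\sim(xg,y)$ defining the biset product $X(\alpha_{i+1})\times_{X(t(\alpha_i))}X(\alpha_i)$, and to confirm that the orthogonal idempotent decomposition $1=\sum_{x\in Q_0}1_{\mathbb{K}X(x)}$ in $A$ kills precisely the incomposable tensors. Once these combinatorial identifications are in place, the fact that $\phi$ is multiplicative is automatic from the construction via the universal property, and the acyclic hypothesis in \cite{CW} plays no role since $T_A(V)$ is already graded by path length regardless of whether $Q$ has oriented cycles.
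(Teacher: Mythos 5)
Your argument is correct and is essentially the expected one: the paper gives no proof of this statement, deferring to \cite[Proposition~2.2]{CW}, and the proof there proceeds in the same way, building the map by the universal property of $T_A(V)$ and identifying $V^{\otimes_A n}$ with $\bigoplus_{p\in Q_n}\mathbb{K}X(p)$ via the standard isomorphism $\mathbb{K}Y\otimes_{\mathbb{K}G}\mathbb{K}Z\cong\mathbb{K}(Y\times_G Z)$ together with (\ref{iso}). Your closing observation that acyclicity is irrelevant agrees with the paper's own remark that the proof of the acyclic case ``clearly applies to the general case.''
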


For a finite acyclic EI quiver $(Q,X)$, we define a new finite EI quiver $(\overline{Q},\overline{X})$ as follows: the double quiver $\overline{Q}$ is obtained by adjoining a reverse arrow $\alpha^\ast:j\rightarrow i$ for each arrow $\alpha:i\rightarrow j$ in $Q$, and the assignment $\overline{X}$ is given by
\[\overline{X}(i)=X(i),\ \forall i\in Q_0,\ \overline{X}(\alpha)=X(\alpha),\ \overline{X}(\alpha^\ast)=X(\alpha)^\ast,\ \forall\alpha\in Q_1.\]
 For an arrow $\alpha\in Q_1$, we recall that $X(\alpha)$ is a $(X(t(\alpha)),X(s(\alpha)))$-biset. Denote by $L_\alpha$ a representative set of  $X(t(\alpha))$-orbits of $X(\alpha)$, and by $R_\alpha$ a representative set of  $X(s(\alpha))$-orbits of $X(\alpha)$.
 
  The following definition is inspired by \cite[Definition~4.5]{K} and \cite[Definition~3.3.4]{Geu2017}.

\begin{dfn}\label{Def2.2}
	Let $(Q,X)$ be a finite acyclic EI quiver. We define the \emph{preprojective algebra} of $(Q,X)$ by
	\[\Pi(Q,X):=\mathbb{K}\mathcal{C}(\overline{Q},\overline{X})/(\rho),\]
	where $\rho=\sum_{\alpha\in Q_1}\left(\sum_{b\in R_\alpha}bb^\ast-\sum_{c\in L_\alpha}c^\ast c\right)$.
\end{dfn}
\begin{rem}\label{rem2.3}
	\begin{enumerate}[(1)]
		\item  We observe that $(bg)(bg)^\ast=(bg)(g^{-1}b^\ast)=bb^\ast$ for all $b\in R_\alpha$ and $g\in X(s(\alpha))$. Similarly, we have $(gc)^\ast(gc)=c^\ast c$ for all $c\in L_\alpha$ and $g\in X(t(\alpha)).$ Therefore the element $\rho\in \mathbb{K}\mathcal{C}(\overline{Q},\overline{X})$ in definition of $\Pi(Q,X)$ does not depend on the choices of $R_\alpha$ and $L_\alpha$.
		\item For each $i\in Q_0$, let $$\rho_i=\sum_{\alpha\in Q_1\atop t(\alpha)=i}\sum_{b\in R_\alpha}bb^\ast-\sum_{\beta\in Q_1\atop s(\beta)=i}\sum_{c\in L_\beta}c^\ast c.$$ We observe $(\rho)=(\rho_i,i\in Q_0)$.
		\item Assume that the above assignment $X$ is trivial, that is, all set $X(i)$ and $X(\alpha)$ are consisting of single elements. Then the preprojective algebra $\Pi(Q,X)$ is the classical preprojective algebra $\Pi(Q)$ \cite{Rin} associated to $Q$.
	\end{enumerate}
\end{rem}
\subsection{The algebra $\Pi$ as a tensor algebra}

Let $S$ be a $\K$-algebra, and $V_0,V_1$ be two $S$-$S$-bimodules. We equip the tensor algebra $\Lambda=T_S(V_0\oplus V_1)$ with an $\mathbb{N}$-grading by setting $|s|=0$ for $s\in S$, $|x|=0$ for $x\in V_0$ and $|y|=1$ for $y\in V_1$. Let $\rho\in \Lambda$ with degree one, and define $\Pi=\Lambda/(\rho)$. Then $\Pi$ is also $\mathbb{N}$-graded. Denote by $\Lambda_i$  (\emph{resp}. $\Pi_i$) the subspace of  $\Lambda$ (\emph{resp}. $\Pi$) consisting of elements with degree $i$. 

For the universal property of the tensor algebra, we refer to \cite[Section~1]{BSZ}. The following lemma shows that $\Pi$ is a tensor algebra; compare \cite[Proposition~6.3]{GLS}.

\begin{lem}\label{lem2.5}
  	Keep the notation as above. Then we have the following statements.
  	
  	\begin{enumerate}[(1)]
  		\item $\Lambda_0=T_S(V_0)$ and $\Lambda_1$ is the sub $\Lambda_0$-$\Lambda_0$-bimodule of $\Lambda$ generated by $V_1$. 
  		\item $\Pi_0\cong\Lambda_0$, and $\Pi_1$ is the sub $\Pi_0$-$\Pi_0$-bimodule of $\Pi$ generated by the image of $V_1$ in $\Pi$. 
  		\item $\Pi_1\cong (\Lambda_0\otimes_S V_1\otimes \Lambda_0)/\Lambda_0\rho\Lambda_0$.
  		\item $\Pi\cong T_{\Pi_0}(\Pi_1)$.
  	\end{enumerate}
\end{lem}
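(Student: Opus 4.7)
Everything in the lemma is a formal consequence of two observations: (a) the natural $\mathbb N$-grading on $\Lambda$ counts the number of $V_1$ tensor factors in a pure tensor, and (b) $\rho$ is homogeneous of degree one. The backbone of the argument will be the ring identification $\Lambda\cong T_{\Lambda_0}(\Lambda_1)$; once this is in hand, part (4) reduces to a general quotient formula for tensor algebras applied to the relation $\rho$.

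For parts (1)--(3), I would unpack the grading directly. Decomposing $(V_0\oplus V_1)^{\otimes_S n}$ as a direct sum over binary words $w\in\{0,1\}^n$, with $w$ contributing the summand $V_{w_1}\otimes_S\cdots\otimes_S V_{w_n}$, the words with no $1$'s collect into $T_S(V_0)=\Lambda_0$, and those with exactly one $1$ collect into
\[\Lambda_1\;\cong\;\bigoplus_{i,j\ge 0}V_0^{\otimes i}\otimes_S V_1\otimes_S V_0^{\otimes j}\;=\;\Lambda_0\otimes_S V_1\otimes_S\Lambda_0,\]
which is visibly the sub $\Lambda_0$-$\Lambda_0$-bimodule of $\Lambda$ generated by $V_1$; this proves (1). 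Since $\rho$ is homogeneous of degree one, the two-sided ideal $(\rho)=\Lambda\rho\Lambda$ sits in degrees $\ge 1$, so $(\rho)\cap\Lambda_0=0$ and $\Pi_0\cong\Lambda_0$; the description of $\Pi_1$ as a $\Pi_0$-bimodule in (2) is then immediate by passing (1) to the quotient. For (3) I would observe that a product $\lambda\rho\mu$ lies in $\Lambda_1$ if and only if $\lambda,\mu\in\Lambda_0$, so $(\rho)\cap\Lambda_1=\Lambda_0\rho\Lambda_0$, yielding
\[\Pi_1=\Lambda_1/\Lambda_0\rho\Lambda_0\cong(\Lambda_0\otimes_S V_1\otimes_S\Lambda_0)/\Lambda_0\rho\Lambda_0.\]

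For (4), I would first iterate the computation in (1) to obtain the bimodule identification $\Lambda_n\cong\Lambda_1^{\otimes_{\Lambda_0}n}$: writing each factor as $\Lambda_1=\Lambda_0\otimes_S V_1\otimes_S\Lambda_0$ and collapsing the adjacent $\Lambda_0\otimes_{\Lambda_0}\Lambda_0$'s, the resulting expression $\Lambda_0\otimes_S V_1\otimes_S\Lambda_0\otimes_S\cdots\otimes_S V_1\otimes_S\Lambda_0$ is exactly the degree-$n$ summand of $T_S(V_0\oplus V_1)$. This promotes the grading to a ring isomorphism $\Lambda\cong T_{\Lambda_0}(\Lambda_1)$. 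I would then invoke the general quotient formula
\[T_R(M)/(\rho)\;\cong\;T_R\!\left(M/R\rho R\right)\qquad(R\text{ a ring},\ M\text{ an }R\text{-bimodule},\ \rho\in M),\]
which both sides satisfy by corepresenting the same functor, namely pairs of a ring map out of $R$ and an $R$-$R$-bimodule map on $M$ that kills $\rho$. Specialising to $R=\Lambda_0$ and $M=\Lambda_1$ gives $\Pi\cong T_{\Lambda_0}(\Lambda_1/\Lambda_0\rho\Lambda_0)\cong T_{\Pi_0}(\Pi_1)$, as required.

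The only step where notation can obscure the content is the bimodule identification $\Lambda_n\cong\Lambda_1^{\otimes_{\Lambda_0}n}$: the computation is routine, but one must keep track that collapsing $\Lambda_0\otimes_{\Lambda_0}\Lambda_0$ and distributing over the direct sum $\Lambda_0=\bigoplus_i V_0^{\otimes i}$ genuinely reproduces the full tensor-word decomposition of $\Lambda_n$. Once this is verified, every other part of the lemma is automatic from the homogeneity of $\rho$.
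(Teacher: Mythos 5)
Your parts (1)--(3) follow essentially the same route as the paper: decompose the grading by tensor words, note that $(\rho)$ is a graded ideal concentrated in degrees $\ge 1$ with degree-one piece $\Lambda_0\rho\Lambda_0$, and pass to the quotient. For part (4), however, you take a genuinely different and somewhat more conceptual route. The paper works directly with the universal property of $T_S(V_0\oplus V_1)$: it defines $f_0\colon S\to T_{\Pi_0}(\Pi_1)$ and $f_1\colon V_0\oplus V_1\to T_{\Pi_0}(\Pi_1)$, extends to $f\colon\Lambda\to T_{\Pi_0}(\Pi_1)$, checks $f(\rho)=0$ to get $\widetilde f\colon\Pi\to T_{\Pi_0}(\Pi_1)$, and then constructs an explicit inverse $g$ and verifies $\widetilde f\circ g=\mathrm{id}$ and $g\circ\widetilde f=\mathrm{id}$ by chasing generators. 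You instead factor the argument through two reusable general facts: the regrading isomorphism $\Lambda\cong T_{\Lambda_0}(\Lambda_1)$ obtained from $\Lambda_n\cong\Lambda_1^{\otimes_{\Lambda_0}n}$, and the quotient formula $T_R(M)/(\rho)\cong T_R(M/R\rho R)$ for $\rho\in M$, justified by observing that both sides corepresent pairs consisting of a ring map out of $R$ and a bimodule map on $M$ killing $\rho$ (so killing all of $R\rho R$). Both arguments are correct; yours buys modularity and makes transparent why only the degree-one piece of the relation matters, at the cost of having to verify carefully that collapsing $\Lambda_0\otimes_{\Lambda_0}\Lambda_0$ and distributing over $\Lambda_0=\bigoplus_i V_0^{\otimes_S i}$ really recovers the multiplication of $\Lambda$ in all degrees --- a point you correctly flag as the only place where the computation is not purely formal. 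The paper's version avoids that regrading step entirely by mapping out of $S$ and $V_0\oplus V_1$ from the start. One small looseness in your part (3): a general element of $(\rho)\cap\Lambda_1$ is a sum of products $\lambda\rho\mu$, not a single one, so the clean statement is that the ideal is graded with degree-one component $\Lambda_0\rho\Lambda_0$; this is what you (and the paper) actually use, so nothing breaks.
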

\begin{proof}
	(1) We observe that 
	\begin{equation}\label{equ2.4}
	\Lambda_0=\bigoplus_{i\ge 0}V_0^{\otimes_S i}
	\end{equation}
	 and
	\begin{equation}\label{equ2.5}
	 \Lambda_1=\bigoplus_{i,j\ge 0}V_0^{\otimes_S i}\otimes_S V_1\otimes_S V_0^{\otimes_S j}=\Lambda_0\otimes_S V_1\otimes_S\Lambda_0. 
	 \end{equation}
	
	(2) It follows from the facts that $\Pi_i\cong\Lambda_i/(\Lambda_i\cap (\rho))$ and $\Lambda_0\cap (\rho)=0$.
	
	(3) We have 
	\[(\rho)=\Lambda\rho\Lambda=\bigoplus\limits_{i,j\ge 0}\Lambda_i\rho\Lambda_j.\]
	Since $\rho\in\Lambda_1$, we have $\Lambda_1\cap(\rho)=\Lambda_0\rho\Lambda_0$. Therefore, we have $$\Pi_1\cong\Lambda_1/(\Lambda_1\cap(\rho))=(\Lambda_0\otimes_S V_1\otimes_S\Lambda_0)/\Lambda_0\rho\Lambda_0.$$
	
	(4) Let
	\[	
	f_0:S \hookrightarrow  \Lambda_0  \twoheadrightarrow  \Pi_0\hookrightarrow T_{\Pi_0}(\Pi_1),\;\mbox{and} 
	\]\[
	f_1:V_0\oplus V_1 \hookrightarrow  \Lambda_0\oplus \Lambda_1\twoheadrightarrow  \Pi_0\oplus \Pi_1 \hookrightarrow T_{\Pi_0}(\Pi_1).
	\]
	Since $f_0$ is an algebra homomorphism and $f_1$ is an $S$-$S$-bimodule homomorphism, by the universal property of tensor algebras, there is a unqiue homomorphism of algebras $$f:\Lambda\longrightarrow T_{\Pi_0}(\Pi_1)$$ such that $f|_S=f_0$ and $f|_{V_0\oplus V_1}=f_1$.  By the fact (\ref{equ2.5}), we observe that
	\begin{equation}\label{equ2.6}
		f(x)=x+(\Lambda_1\cap (\rho)):=\overline{x}\in\Pi_1,
		\end{equation}
	 for any $x\in \Lambda_1$. 
	Thus we have $f(\rho)=0$ and a homomorphism of algebras induced by $f$:
	\[\widetilde{f}:\Pi=\Lambda/(\rho)\longrightarrow T_{\Pi_0}(\Pi_1).\]
	
	On the other hand, there is a unique homomorphsim of algebras 
	$$g:T_{\Pi_0}(\Pi_1)\longrightarrow \Pi$$
	 such that $g|_{\Pi_0}=\mathrm{Id}_{\Pi_0}$ and $g|_{\Pi_1}=\mathrm{Id}_{\Pi_1}$. 
	 By (\ref{equ2.4}), we have $$f(y)=y+(\Lambda_0\cap (\rho)):=\overline{y}\in\Pi_0,$$ for any $y\in \Lambda_0$. We obtain $$\widetilde{f}g(\overline{y})=\widetilde{f}(\overline{y})=f(y)=\overline{y},$$
	  for any $y\in \Lambda_0$. 
	Similarly, using (\ref{equ2.6}), we have $\widetilde{f}g(\overline{x})=\overline{x}$ for $x\in\Lambda_1$.
	Therefore we have $\widetilde{f}\circ g=\mathrm{Id}_{T_{\Pi_0}(\Pi_1)}$. Let $\pi:\Lambda\rightarrow \Lambda/(\rho)=\Pi$ be the canonical homomorphism, so we have $\widetilde{f}\circ\pi=f.$ We have
	\[gf(s)=gf_0(s)=g(\overline{s})=\overline{s}=\pi(s),\;\mbox{for all } s\in S.\]
	Here, we identify $\overline{s}=s+(\Lambda_0\cap (\rho))$ with $s+(\rho)=\pi(s)$ in $\Pi$. Similarly, we have
	\[gf(m)=g(\overline{m})=\overline{m}=\pi(m),\;\mbox{for all } m\in V_0\oplus V_1.\] 
	By the universal property of tensor algebras, we immediately obtain $g\circ f=\pi$. 
	So we have $$g\circ\widetilde{f}\circ\pi=g\circ f=\pi,\mbox{ and } g\circ\widetilde{f}=\mathrm{Id}_\Pi.$$ Therefore $\widetilde{f}:\Pi\rightarrow T_{\Pi_0}(\Pi_1)$ is an isomorphism.
\end{proof}

For a finite acyclic EI quiver $(Q,X)$, let $\mathcal{C}=\mathcal{C}(Q,X)$. We set 
$$
A=\prod_{i\in Q_0} \mathbb{K}X(i),\; V_0=\bigoplus_{\alpha\in Q_1}\mathbb{K}X(\alpha),\mbox{ and } V_1=\bigoplus_{\alpha\in Q_1}\mathbb{K}X(\alpha^\ast).
$$
Then $V_0$ and $V_1$ are naturally $A$-$A$-bimodules. Applying Proposition~\ref{CWpro1.1} to $(\overline{Q},\overline{X})$, we have algebra isomorphisms $\mathbb{K}\mathcal{C}(\overline{Q},\overline{X})\cong T_A(V_0\oplus V_1):=\Lambda$ and
\begin{equation}\label{equ2.7}\Pi(Q,X)\cong T_A(V_0\oplus V_1)/(\rho):=\Pi,\end{equation}
 where $\rho=\sum_{\alpha\in Q_1}\left(\sum_{b\in R_\alpha}b\otimes b^\ast-\sum_{c\in L_\alpha}c^\ast \otimes c\right)$. Therefore, by the setting in Lemma~\ref{lem2.5}, we obtain that $\Pi(Q,X)$ is $\mathbb{N}$-graded, where the grading is given as follows: for each $i\in Q_0$ and $\alpha\in Q_1$, we set $|a|=0$ for $a\in X(i)$, $|x|=0$ for $x\in X(\alpha)$ and $|y|=1$ for $y\in X(\alpha^\ast)$. 
\begin{lem}\label{lem2.6}
	Keep the notation as above.	The composite homomorphism $
	\mathbb{K}\mathcal{C}\hookrightarrow\mathbb{K}\mathcal{C}(\overline{Q},\overline{X})\twoheadrightarrow\Pi(Q,X)
	$ induces an isomorphism $\mathbb{K}\mathcal{C}\xrightarrow{\sim}\Pi(Q,X)_0$.
\end{lem}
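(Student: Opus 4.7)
The plan is to read the statement as an immediate consequence of Proposition~\ref{CWpro1.1} and Lemma~\ref{lem2.5}, once the identifications introduced around equation~(\ref{equ2.7}) are laid out carefully. First, I would apply Proposition~\ref{CWpro1.1} to the original EI quiver $(Q,X)$ to obtain an algebra isomorphism $\mathbb{K}\mathcal{C}\cong T_A(V_0)$ that is the identity on $A\oplus V_0$. Applying the same proposition to the doubled EI quiver $(\overline{Q},\overline{X})$ yields the isomorphism $\mathbb{K}\mathcal{C}(\overline{Q},\overline{X})\cong T_A(V_0\oplus V_1)=\Lambda$ already used in~(\ref{equ2.7}), which is the identity on $A\oplus V_0\oplus V_1$. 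These two identifications are compatible on the common generators $A$ and $V_0$, so the natural inclusion $\mathbb{K}\mathcal{C}\hookrightarrow \mathbb{K}\mathcal{C}(\overline{Q},\overline{X})$ becomes the canonical inclusion $T_A(V_0)\hookrightarrow T_A(V_0\oplus V_1)$ induced by $V_0\hookrightarrow V_0\oplus V_1$.

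Under the $\mathbb{N}$-grading declared just before the lemma, Lemma~\ref{lem2.5}(1) identifies this inclusion with the embedding of the degree-zero part $\Lambda_0=T_A(V_0)$ into $\Lambda$. Since $\rho$ is homogeneous of degree one, the argument used in Lemma~\ref{lem2.5}(2) gives $\Lambda_0\cap(\rho)=0$, so the canonical projection $\Lambda\twoheadrightarrow \Pi=\Pi(Q,X)$ restricts to an isomorphism $\Lambda_0\xrightarrow{\sim}\Pi_0$.

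Chaining these two steps, the composite $\mathbb{K}\mathcal{C}\hookrightarrow \mathbb{K}\mathcal{C}(\overline{Q},\overline{X})\twoheadrightarrow \Pi(Q,X)$ is identified with $T_A(V_0)=\Lambda_0\hookrightarrow\Lambda\twoheadrightarrow\Pi$; in particular its image lies in $\Pi_0$ and it induces the desired isomorphism $\mathbb{K}\mathcal{C}\xrightarrow{\sim}\Pi(Q,X)_0$. There is no real obstacle here: everything reduces to checking that the two applications of Proposition~\ref{CWpro1.1} match on $A\oplus V_0$, which is automatic from the fact that each is the identity on the relevant generators.
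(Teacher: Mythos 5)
Your proposal is correct and follows essentially the same route as the paper: the paper's proof is precisely the commutative diagram identifying $T_A(V_0)\hookrightarrow\Lambda\twoheadrightarrow\Pi$ with $\mathbb{K}\mathcal{C}\hookrightarrow\mathbb{K}\mathcal{C}(\overline{Q},\overline{X})\twoheadrightarrow\Pi(Q,X)$ via Proposition~\ref{CWpro1.1}, followed by Lemma~\ref{lem2.5}(1) and (2). You merely spell out the compatibility of the two applications of Proposition~\ref{CWpro1.1} on the generators $A\oplus V_0$, which the paper leaves implicit in its diagram.
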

\begin{proof} We have the following commutative diagram:
$$	\begin{tikzcd}
		T_A(V_0) \arrow[r, hook] \arrow[d, "\cong"]    & \Lambda \arrow[r, two heads] \arrow[d,"\cong"]            & \Pi \arrow[d,"\cong"] \\
		\mathbb{K}\mathcal{C} \arrow[r, hook] & {\mathbb{K}\mathcal{C}(\overline{Q},\overline{X})} \arrow[r, two heads] & {\Pi(Q,X)}   
	\end{tikzcd}
$$
where the vertical isomorphisms on the left side and in the middle are given by Proposition~\ref{CWpro1.1}. By Lemma~\ref{lem2.5} (1) and (2), we have $T_{A}(V_0)=\Lambda_0\cong\Pi_0$, hence we have an algebra isomorphism $\mathbb{K}\mathcal{C}=\mathbb{K}\mathcal{C}(\overline{Q},\overline{X})_0\xrightarrow{\sim}\Pi(Q,X)_0$.
\end{proof}

 We can identify $\mathbb{K}\mathcal{C}$ with $\Pi(Q,X)_0$ via the algebra isomorphism in Lemma~\ref{lem2.6}. Then $\Pi(Q,X)_1$ is a $\mathbb{K}\mathcal{C}$-$\mathbb{K}\mathcal{C}$-bimodule. By Lemma~\ref{lem2.5} (2), we have isomorphism of algebras 
 \begin{equation}\label{equ2.9}
 	\Pi_0\cong \Lambda_0= T_A(V_0)\cong\mathbb{K}\mathcal{C}.
 \end{equation}
  Then we have that $\Pi_1$ is also a $\mathbb{K}\mathcal{C}$-$\mathbb{K}\mathcal{C}$-bimodule. By the isomorphism (\ref{equ2.7}), we have an isomorphism of $\mathbb{K}\mathcal{C}$-$\mathbb{K}\mathcal{C}$-bimodules \begin{equation}\label{equ2.8}\Pi_1\cong\Pi(Q,X)_1.\end{equation} 
 Then $\Pi(Q,X)_1$ is precisely the sub $\mathbb{K}\mathcal{C}$-$\mathbb{K}\mathcal{C}$-bimodule of $\Pi(Q,X)$ generated by $X(\alpha^\ast)$ for all $\alpha\in Q_1$.
\begin{pro}\label{pro2.6}
	Keep the notation as above. Then we have the following statements.
	\begin{enumerate}[(1)]
		\item $\Pi(Q,X)_1\cong(\mathbb{K}\mathcal{C}\otimes_A V_1 \otimes_A\mathbb{K}\mathcal{C})/\mathbb{K}\mathcal{C}\rho'\mathbb{K}\mathcal{C}$. Here, $$\rho'=\sum_{\alpha\in Q_1}\left(\sum_{b\in R_\alpha}b\otimes b^\ast\otimes1-\sum_{c\in L_\alpha}1\otimes c^\ast\otimes c\right).$$
		\item $\Pi(Q,X)\cong T_{\mathbb{K}\mathcal{C}}(\Pi(Q,X)_1)$.
	\end{enumerate}
\end{pro}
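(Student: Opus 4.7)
The plan is to reduce both statements directly to Lemma~\ref{lem2.5} applied to the tensor algebra $\Lambda=T_A(V_0\oplus V_1)$ with $S=A$, and then translate back via the identifications already set up in the excerpt. Recall that (\ref{equ2.7}) realises $\Pi(Q,X)$ as $\Pi=\Lambda/(\rho)$, that Proposition~\ref{CWpro1.1} gives $\Lambda_0=T_A(V_0)\cong\mathbb{K}\mathcal{C}$, that Lemma~\ref{lem2.6} identifies this with $\Pi(Q,X)_0$, and that (\ref{equ2.8}) identifies $\Pi_1$ with $\Pi(Q,X)_1$. Once these identifications are in place, parts (1) and (2) are the content of Lemma~\ref{lem2.5}(3) and Lemma~\ref{lem2.5}(4) respectively.

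For (1), I would apply Lemma~\ref{lem2.5}(3) to obtain
\[
\Pi_1\;\cong\;(\Lambda_0\otimes_A V_1\otimes_A\Lambda_0)\big/\Lambda_0\rho\Lambda_0.
\]
Under (\ref{equ2.9}) the left factor $\Lambda_0$ becomes $\mathbb{K}\mathcal{C}$, and via (\ref{equ2.8}) the left-hand side becomes $\Pi(Q,X)_1$. The only item requiring a short check is that the element $\rho$, viewed inside $\Lambda_1=\Lambda_0\otimes_A V_1\otimes_A\Lambda_0$ as in (\ref{equ2.5}), becomes exactly $\rho'$. This is straightforward: for each $\alpha\in Q_1$ and $b\in R_\alpha$, the product $b\otimes b^\ast\in V_0\otimes_A V_1\subset\Lambda_1$ is the image of $b\otimes b^\ast\otimes 1$ under the embedding $V_0\otimes_A V_1\otimes_A A\hookrightarrow \Lambda_0\otimes_A V_1\otimes_A\Lambda_0$; symmetrically $c^\ast\otimes c$ is the image of $1\otimes c^\ast\otimes c$. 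Summing gives $\rho\leftrightarrow\rho'$, hence $\Lambda_0\rho\Lambda_0\leftrightarrow \mathbb{K}\mathcal{C}\rho'\mathbb{K}\mathcal{C}$.

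For (2), Lemma~\ref{lem2.5}(4) gives $\Pi\cong T_{\Pi_0}(\Pi_1)$. Combining the isomorphism $\Pi\cong\Pi(Q,X)$ from (\ref{equ2.7}), the algebra isomorphism $\Pi_0\cong\mathbb{K}\mathcal{C}$ from Lemma~\ref{lem2.6}, and the bimodule isomorphism $\Pi_1\cong\Pi(Q,X)_1$ from (\ref{equ2.8}), we obtain $\Pi(Q,X)\cong T_{\mathbb{K}\mathcal{C}}(\Pi(Q,X)_1)$ as required. One should observe that the latter isomorphisms are compatible in the sense that $\Pi_1$ becomes a $\mathbb{K}\mathcal{C}$-$\mathbb{K}\mathcal{C}$-bimodule through the algebra isomorphism $\Pi_0\cong\mathbb{K}\mathcal{C}$, so that forming the tensor algebra over $\Pi_0$ matches forming it over $\mathbb{K}\mathcal{C}$.

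The only mildly delicate step is the identification $\rho\leftrightarrow\rho'$ in paragraph~(1), which is purely notational; every other step is a direct invocation of a lemma proved earlier in this section. In particular, no new universal property argument or construction is needed beyond assembling the existing isomorphisms into a commutative picture analogous to the diagram in the proof of Lemma~\ref{lem2.6}.
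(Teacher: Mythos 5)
Your proposal is correct and follows essentially the same route as the paper: both parts are obtained by invoking Lemma~\ref{lem2.5}(3) and (4) for $\Lambda=T_A(V_0\oplus V_1)$ and then transporting the result through the identifications (\ref{equ2.7}), (\ref{equ2.9}) and (\ref{equ2.8}). Your explicit check that $\rho$ corresponds to $\rho'$ under the embedding of $V_0\otimes_A V_1$ and $V_1\otimes_A V_0$ into $\Lambda_0\otimes_A V_1\otimes_A\Lambda_0$ is a small but welcome addition that the paper leaves implicit.
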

\begin{proof}
	(1) By (\ref{equ2.9}), (\ref{equ2.8}) and Lemma~\ref{lem2.5}~(3), we have 
	\[\Pi(Q,X)_1\cong\Pi_1\cong(\Lambda_0\otimes_A\otimes V_1\otimes_A\Lambda_0)/\Lambda_0\rho\Lambda_0\cong(\mathbb{K}\mathcal{C}\otimes_A V_1 \otimes_A\mathbb{K}\mathcal{C})/\mathbb{K}\mathcal{C}\rho'\mathbb{K}\mathcal{C}.\]
	
(2)	We have the following isomorphisms of algebras
	\[\Pi(Q,X)\cong\Pi\cong T_{\Pi_0}(\Pi_1)\cong T_{\mathbb{K}\mathcal{C}}(\Pi(Q,X)_1).\] 
	Starting from the left side, the first isomorphism is (\ref{equ2.7}), the second one is given by Lemma~\ref{lem2.5} (3) and the last one follows from the isomorphisms $\Pi_0\cong\Lambda_0\cong\mathbb{K}\mathcal{C}$ and $(\ref{equ2.8})$.
\end{proof}

\section{Representations of a finite EI quiver}\label{sec3}

We fix a finite acyclic EI quiver $(Q,X)$, where $X$ is \emph{action-free}, that is, the left $X(t(\alpha))$-action and right $X(s(\alpha))$-action on $X(\alpha)$ are both free for each $\alpha\in Q_1$.  Then $\mathbb{K}X(\alpha)$ is free as a right $\mathbb{K}X(s(\alpha))$-module and free as a left $\mathbb{K}X(t(\alpha))$-module. Denote by $A_i=\mathbb{K}X(i)$ the group algebra of $X(i)$ for each $i\in Q_0$.

In this section, we make preparation on the proof of Theorem~\ref{pro4.2}. The aim of this section is to obtain the commutative diagram; see Proposition~\ref{lem3.10}. 

\subsection{Locally projective representations}

In this subsection, we introduce the representation of a finite EI quiver. In fact, we are interested in the locally projective representation \cite{K}. By applying \cite[Proposition~3.5]{K}, we obtain the homological characterization of locally projective repersentations; see Proposition~\ref{pro3.3}.

The representation theory of a finite EI quiver is similar to the representation theory of a modulated graph \cite{DR}; compare \cite[Section~5]{GLS} and \cite[Section~2]{K}. A \emph{representation} $M=(M_i,M_\alpha)_{i\in Q_0,\alpha\in Q_1}$ of $(Q,X)$ consists of a finite-dimensional $A_i$-module $M_i$ for each $i\in Q_0$ and a homomorphism of $A_i$-modules
\[M_\alpha:\mathbb{K}X(\alpha)\otimes_{A_j} M_j\rightarrow M_i,\]
for each arrow $\alpha:j\rightarrow i\in Q_1$. A \emph{morphism} $f:M\rightarrow N$ of representations of $(Q,X)$ is a tuple $f=(f_i)_{i\in Q_0}$ of homomorphisms of $A_i$-modules $f_i:M_i\rightarrow N_i$ such that the diagram
\[\begin{tikzcd}
	\mathbb{K}X(\alpha)\otimes_{A_j}M_j \arrow[r, "M_\alpha"] \arrow[d,"1\otimes_{A_j} f_j"'] & M_i \arrow[d, "f_i"] \\
	\mathbb{K}X(\alpha)\otimes_{A_j}N_j \arrow[r, "N_\alpha"]                                 & N_i                 
\end{tikzcd}\]
commutes for each $\alpha:j\rightarrow i\in Q_1$. The representations of $(Q,X)$ form a category $\mathrm{rep}(Q,X)$. $A$ representation $M\in\mathrm{rep}(Q,X)$ is called \emph{locally projective}, if each $A_i$-module $M_i$ is projective. Let $\mathrm{rep}_{l.p.}(Q,X)$ be the subcategory of $\mathrm{rep}(Q,X)$ consisting of all locally projective representations.

 The following proposition generalises \cite[Proposition~III.1.7]{ARS}; compare \cite[Section~8]{M} and \cite[Proposition~1.1]{Xu}.
\begin{pro}\label{pro3.1}
	The categories $\mathrm{rep}(Q,X)$ and $\mathbb{K}\mathcal{C}(Q,X)$-$\mathrm{mod}$ are equivalent.
\end{pro}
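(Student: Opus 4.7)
The approach is to reduce the claim to the standard equivalence between modules over a tensor algebra $T_A(V)$ and pairs $(M,\phi)$ with $M\in A\text{-}\mathrm{mod}$ and $\phi\colon V\otimes_A M\to M$ an $A$-module map. By Proposition~\ref{CWpro1.1} we have $\mathbb{K}\mathcal{C}\cong T_A(V_0)$ with $A=\prod_{i\in Q_0}A_i$ and $V_0=\bigoplus_{\alpha\in Q_1}\mathbb{K}X(\alpha)$. Since $A$ is a product indexed by $Q_0$, an $A$-module is a tuple $(M_i)_{i\in Q_0}$ of $A_i$-modules, and since $\mathbb{K}X(\alpha)$ is concentrated in block $(t(\alpha),s(\alpha))$ as an $A$-bimodule, one has
\[
V_0\otimes_A M \;\cong\; \bigoplus_{\alpha\in Q_1}\mathbb{K}X(\alpha)\otimes_{A_{s(\alpha)}}M_{s(\alpha)},
\]
so specifying an $A$-linear $\phi$ is equivalent to specifying a tuple of maps $M_\alpha\colon \mathbb{K}X(\alpha)\otimes_{A_{s(\alpha)}}M_{s(\alpha)}\to M_{t(\alpha)}$, which is precisely the datum of a representation of $(Q,X)$.

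Concretely, I would define $F\colon\mathrm{rep}(Q,X)\to\mathbb{K}\mathcal{C}\text{-}\mathrm{mod}$ by $F(M)=\bigoplus_{i\in Q_0}M_i$, letting $e_i\in\mathbb{K}\mathcal{C}$ act as projection onto $M_i$, letting $g\in X(i)$ act via the given $A_i$-structure, and letting $x\in X(\alpha)$ (for $\alpha\colon j\to i$) act by $m\mapsto M_\alpha(x\otimes m)$ on $M_j$ and by zero on $M_k$ for $k\neq j$. On morphisms, set $F(f)=\bigoplus_i f_i$; the commuting squares in the definition of a morphism of representations translate directly into $\mathbb{K}\mathcal{C}$-linearity. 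The inverse functor $G\colon\mathbb{K}\mathcal{C}\text{-}\mathrm{mod}\to\mathrm{rep}(Q,X)$ is given by $G(N)_i=e_iN$ with its natural $A_i$-action, and for $\alpha\colon j\to i$ the structure map $G(N)_\alpha\colon\mathbb{K}X(\alpha)\otimes_{A_j}e_jN\to e_iN$ is induced from the $\mathbb{K}\mathcal{C}$-action $x\otimes n\mapsto x\cdot n$; well-definedness uses that $X(\alpha)\subseteq e_i\mathbb{K}\mathcal{C}e_j$ and that the right $A_j$-action on $\mathbb{K}X(\alpha)$ agrees with post-composition with $A_j$ inside $\mathbb{K}\mathcal{C}$.

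The remaining work is to verify $FG\cong\mathrm{Id}$ and $GF\cong\mathrm{Id}$, both of which follow once one observes that $\{e_i\}_{i\in Q_0}$ is a complete set of orthogonal idempotents of $\mathbb{K}\mathcal{C}$, so every $\mathbb{K}\mathcal{C}$-module decomposes canonically as $N=\bigoplus_{i\in Q_0}e_iN$. I expect the principal subtlety, rather than a genuine obstacle, to be carefully tracking identifications through Proposition~\ref{CWpro1.1} to confirm that $X(\alpha)$ acts on $FG(N)$ precisely as on $N$; this is routine because both actions are determined by the generators $\bigcup_{i\in Q_0}A_i\cup\bigcup_{\alpha\in Q_1}\mathbb{K}X(\alpha)$ of $\mathbb{K}\mathcal{C}$, and the identifications on these generators are built into the definitions of $F$ and $G$.
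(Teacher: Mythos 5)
Your proposal is correct and follows essentially the same route as the paper: the explicit quasi-inverse functors $F(M)=\bigoplus_{i\in Q_0}M_i$ and $G(N)_i=e_iN$ you describe are exactly the ones constructed in the paper's proof. Your additional framing via $\mathbb{K}\mathcal{C}\cong T_A(V_0)$ and the standard description of $T_A(V_0)$-modules as pairs $(M,\phi)$ is a welcome refinement, since it makes transparent why prescribing the action only on the generators $A\cup V_0$ yields a well-defined $\mathbb{K}\mathcal{C}$-module structure, a point the paper passes over more quickly.
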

\begin{proof}
	We write $\mathcal{C}=\mathcal{C}(Q,X)$. For a representation $M=(M_i,M_\alpha)_{i\in Q_0,\alpha\in Q_1}$ of $(Q,X)$, we define a $\mathbb{K}\mathcal{C}$-module $F(M)$ as follows: $F(M)=\bigoplus_{i\in Q_0}M_i$ as a $\K$-vector space. To define a $\mathbb{K}\mathcal{C}$-module structure on $F(M)$, it suffices to define the products of the form $xm_k$, where $x\in X(p)$ for $p \in Q_0\sqcup Q_1$, and $m_k\in M_k$. If $p=i\in Q_0$, we set $xm_k=xm_i$ in the $A_i$-module $M_i$ for $k=i$, and set $xm_k=0$ for $k\ne i$. If $p=\alpha\in Q_1$ from $i$ to $j$, we set $xm_k=M_\alpha(x\otimes m_k)$ for $k=i$, and set $xm_k=0$ for $k\ne i$. 
	Then $F(M)$ is a $\mathbb{K}\mathcal{C}$-module. If $f=(f_i)_{i\in Q_0}:M\rightarrow N$ is a morphism in $\mathrm{rep}(Q,X)$, then $F(f)=\bigoplus_{i\in Q_0}f_i:F(M)\rightarrow F(N)$ is a $\mathbb{K}\mathcal{C}$-module homomorphism. Then we get a functor $F:\mathrm{rep}(Q,X)\rightarrow \mathbb{K}\mathcal{C}$-$\mathrm{mod}$.
	
	Conversely, let $Y$ be a $\mathbb{K}\mathcal{C}$-module. We define a representation $G(Y)=(Y_i,Y_\alpha)$ as follows: for each $i\in Q_0$, $Y_i=e_i Y$ which is an $A_i$-module, and for each $\alpha\in Q_1$,  $Y_\alpha:\mathbb{K}X(\alpha)\otimes_{A_{s(\alpha)}} Y_{s(\alpha)}\rightarrow Y_{t(\alpha)}$ is given by $v\otimes m\mapsto vm$, which is an $A_{t(\alpha)}$-module homorphism. Then $G(Y)$ is a representation of $(Q,X)$. If $g:Y\rightarrow Y'$ is a $\mathbb{K}\mathcal{C}$-module homomorphism, then $G(g)=(g_i)_{i\in Q_0}:G(Y)\rightarrow G(Y')$ is a morphism in $\mathrm{rep}(Q,X)$, where $g_i=g|_{e_iY}$ for each $i\in Q_0$. Then we get a functor $G:\mathbb{K}\mathcal{C}$-\rm{mod}$\rightarrow\mathrm{rep}(Q,X)$.
	
	We observe that the functors $F$ and $G$ are quasi-inverse to each other. Thus the categories $\mathrm{rep}(Q,X)$ and $\mathbb{K}\mathcal{C}$-$\mathrm{mod}$ are equivalent.
\end{proof}

The following result is the homological characterization of locally projective representations; compare \cite[Theorem~1.2]{GLS}.
\begin{pro}\label{pro3.3}
	Let $(Q,X)$ be a finite acyclic EI quiver with the assignment $X$ action-free.	For $M\in\mathrm{rep}(Q,X)$, the following statements are equivalent:
	\begin{enumerate}[(1)]
		\item $\projdim(M)\le 1$;
		\item $\injdim(M)\le 1$;
		\item $M\in\mathrm{rep}_{l.p.}(Q,X)$.
	\end{enumerate}
\end{pro}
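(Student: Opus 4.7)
My strategy combines the standard tensor-algebra resolution of $\mathbb{K}\mathcal{C}$-modules with the self-injectivity of the vertex algebras $A_i = \mathbb{K}X(i)$. Writing $A = \prod_{i \in Q_0} A_i$ and $V_0 = \bigoplus_{\alpha \in Q_1} \mathbb{K}X(\alpha)$, Proposition~\ref{CWpro1.1} gives $\mathbb{K}\mathcal{C} \cong T_A(V_0)$. I would establish $(1) \Leftrightarrow (3)$ first and then transfer to $(2) \Leftrightarrow (3)$ by $\K$-duality.

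For $(3) \Rightarrow (1)$, I would consider, for every $M \in \mathrm{rep}(Q,X)$, the canonical sequence
\[0 \longrightarrow \mathbb{K}\mathcal{C} \otimes_A V_0 \otimes_A M \xrightarrow{\;d\;} \mathbb{K}\mathcal{C} \otimes_A M \xrightarrow{\;\epsilon\;} M \longrightarrow 0,\]
where $\epsilon(r \otimes m) = rm$ and $d(r \otimes v \otimes m) = rv \otimes m - r \otimes vm$, and verify exactness using the path-length $\mathbb{N}$-grading on $\mathbb{K}\mathcal{C}$: the tensoring part of $d$ acts as the identity on the top graded component, giving injectivity, and an induction on top degree identifies $\ker \epsilon$ with $\mathrm{im}\,d$. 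Assume $M$ is locally projective. Then $M$ is $A$-projective, hence $\mathbb{K}\mathcal{C} \otimes_A M$ is $\mathbb{K}\mathcal{C}$-projective. Similarly $V_0 \otimes_A M \cong \bigoplus_\alpha \mathbb{K}X(\alpha) \otimes_{A_{s(\alpha)}} M_{s(\alpha)}$ is $A$-projective: action-freeness makes $\mathbb{K}X(\alpha)$ free as a left $A_{t(\alpha)}$-module, so if $M_{s(\alpha)}$ is a summand of $A_{s(\alpha)}^n$, then $\mathbb{K}X(\alpha) \otimes_{A_{s(\alpha)}} M_{s(\alpha)}$ is a summand of the free $A_{t(\alpha)}$-module $\mathbb{K}X(\alpha)^n$. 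Therefore both non-trivial terms are $\mathbb{K}\mathcal{C}$-projective and $\projdim M \le 1$.

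For $(1) \Rightarrow (3)$, I would first show that the exact functor $e_i(-) : \mathbb{K}\mathcal{C}\text{-mod} \to A_i\text{-mod}$ preserves projectives: it suffices to examine $P = \mathbb{K}\mathcal{C} e_j$, where $e_i \mathbb{K}\mathcal{C} e_j \cong \bigoplus_{p : j \to i} \mathbb{K}X(p)$ and each $\mathbb{K}X(p)$ is free as a left $A_i$-module, since the left $X(i)$-action on $X(p)$ comes from the leading arrow and is therefore free. Applying $e_i(-)$ to a short projective resolution $0 \to P_1 \to P_0 \to M \to 0$ yields $\projdim_{A_i}(M_i) \le 1$. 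Because $A_i$ is a finite-dimensional group algebra it is Frobenius, and for self-injective artin algebras finite projective dimension forces projectivity (the first syzygy in a length-one projective resolution is both projective and injective, so the sequence splits). Hence each $M_i$ is $A_i$-projective, i.e.\ $M$ is locally projective.

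For $(2) \Leftrightarrow (3)$, I would apply the $\K$-dual $D = \Hom_{\K}(-, \K)$, which identifies $\mathbb{K}\mathcal{C}\text{-mod}$ with $\mathbb{K}\mathcal{C}^{\mathrm{op}}\text{-mod}$, swaps injective coresolutions with projective resolutions, and, thanks to the Frobenius property of each $A_i$, preserves the ``locally projective'' class. Since $\mathbb{K}\mathcal{C}^{\mathrm{op}}$ is the category algebra of the opposite EI quiver $(Q^{\mathrm{op}}, X^{\mathrm{op}})$, which is again finite, acyclic and action-free, the already-proven $(1) \Leftrightarrow (3)$ applied to $DM$ over $\mathbb{K}\mathcal{C}^{\mathrm{op}}$ yields the desired equivalence. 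The main obstacle will be the careful exactness verification of the standard resolution in the biset setting; once that is in place, the remaining steps reduce to routine homological algebra over the self-injective vertex algebras $A_i$.
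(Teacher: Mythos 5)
Your proposal is correct in substance, but it takes a very different (and much longer) route than the paper: the paper's entire proof is the one-line observation that each group algebra $\mathbb{K}X(i)$ is self-injective, together with a citation of the case $n=0$ of Proposition~3.5 of K\"ulshammer's paper on pro-species of algebras, which is precisely this statement in that setting. What you have written out is, in effect, the content of the cited result: the standard tensor-algebra resolution (which the paper records separately as Lemma~3.2, deduced from Schofield's bimodule resolution of $T_A(V_0)$ rather than by a direct grading computation) gives $(3)\Rightarrow(1)$; exactness of $e_i(-)$ plus projectivity of $e_i\mathbb{K}\mathcal{C}$ over $A_i$ and self-injectivity of $A_i$ gives $(1)\Rightarrow(3)$; and $\mathbb{K}$-duality over the opposite EI quiver transfers everything to $(2)\Leftrightarrow(3)$. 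Your version buys self-containedness at the cost of reproving the reference; both are legitimate.

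One step deserves more care. In $(1)\Rightarrow(3)$ you assert that the left $X(i)$-action on $X(p)$, for a path $p:j\to i$, ``comes from the leading arrow and is therefore free.'' Freeness of the left action on the leading biset alone does not suffice: in a biset product $X\times_{G}Y$ one has $g\cdot(x,y)=(x,y)$ if and only if $gx=xh^{-1}$ and $hy=y$ for some $h\in G$, so to force $h=1$ you also need the left $G$-action on $Y$ to be free. (For instance, take $X=\mathbb{Z}/2$ as a biset over itself and $Y$ a one-point set with trivial left $\mathbb{Z}/2$-action; the product is a point with trivial, hence non-free, left action.) Under the standing action-free hypothesis every left action along the path is free, so an induction on the length of $p$ yields the claim --- but that induction, not the leading arrow alone, is what you should invoke.
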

\begin{proof}
	Since the group algebra $\mathbb{K}X(i)$ is a selfinjective algebra for each $i\in Q_0$, the result follows from the case $n=0$ of \cite[Proposition~3.5]{K}.
\end{proof}
By Proposition~\ref{pro3.1}, we can identify the categories $\mathrm{rep}(Q,X)$ with $\mathbb{K}\mathcal{C}(Q,X)$-$\rm{mod}$. The following result is analogous to \cite[Corollary~7.2]{GLS}.

\begin{lem}\label{lem3.2}
		Let $(Q,X)$ be a finite acyclic EI quiver with $X$ action-free and let $\mathcal{C}=\mathcal{C}(Q,X)$. For $M\in\mathrm{rep}_{l.p.}(Q,X)$,	we have a projective resolution of $M$:
\begin{equation}\label{3.1}
			0 \to \bigoplus\limits_{\alpha\in Q_1}\mathbb{K}\mathcal{C} e_{t(\alpha)}\otimes_{A_{t(\alpha)}}\mathbb{K}X(\alpha)\otimes_{A_{s(\alpha)}}M_{s(\alpha)} \xrightarrow{d} \bigoplus\limits_{i\in Q_0}\mathbb{K}\mathcal{C} e_i\otimes_{A_i}M_i \xrightarrow{\mu} M \to 0
	\end{equation}
	where
	\[d(p\otimes v\otimes m)=p\otimes M_\alpha(v\otimes m)-pv\otimes m,\mbox{ for }p\in \mathbb{K}\mathcal{C} e_{s(\alpha)},\; v\in\mathbb{K}X(\alpha),\mbox{ and } m\in M_{s(\alpha)};\]
	\[\mu(p\otimes m)=pm,\mbox{ for }p\in \mathbb{K}\mathcal{C} e_i\mbox{ and }\ m\in M_i.\]
\end{lem}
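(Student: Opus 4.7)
The plan is to recognise the claimed sequence as the standard short resolution of a module over a tensor algebra, and then to handle exactness and projectivity as two separate issues.

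First, using Proposition~\ref{CWpro1.1}, I would identify $\mathbb{K}\mathcal{C}$ with the tensor algebra $T := T_A(V_0)$, where $A = \prod_{i\in Q_0} A_i$ and $V_0 = \bigoplus_{\alpha\in Q_1}\mathbb{K}X(\alpha)$. Setting $M = \bigoplus_i M_i$ and regarding it as an $A$-module in the obvious way, I would identify
\[
\bigoplus_{i} \mathbb{K}\mathcal{C} e_i \otimes_{A_i} M_i \;\cong\; \mathbb{K}\mathcal{C}\otimes_A M, \qquad \bigoplus_{\alpha} \mathbb{K}\mathcal{C} e_{t(\alpha)}\otimes_{A_{t(\alpha)}}\mathbb{K}X(\alpha)\otimes_{A_{s(\alpha)}} M_{s(\alpha)} \;\cong\; \mathbb{K}\mathcal{C}\otimes_A V_0 \otimes_A M.
\]
Under these identifications the map $\mu$ becomes $p\otimes m \mapsto pm$ and $d$ becomes $p\otimes v\otimes m \mapsto p\otimes vm - pv \otimes m$, so the sequence is precisely the standard short resolution of the $T$-module $M$ over the tensor algebra $T = T_A(V_0)$.

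For exactness I would apply $- \otimes_T M$ to the classical short exact sequence of $T$-bimodules
\[
0 \;\longrightarrow\; T\otimes_A V_0\otimes_A T \;\longrightarrow\; T\otimes_A T \;\longrightarrow\; T \;\longrightarrow\; 0,
\]
which splits as a sequence of right $T$-modules via $t \mapsto 1\otimes t$; the right-$T$-linearity of the splitting ensures that tensoring with any left $T$-module preserves short-exactness. Alternatively, one can grade $T$ by tensor length and build an $A$-linear contracting homotopy sending $v_1\otimes\cdots\otimes v_n\otimes m \in V_0^{\otimes n}\otimes_A M$ (for $n\ge 1$) to $v_1\otimes\cdots\otimes v_{n-1}\otimes v_n\otimes m \in V_0^{\otimes(n-1)}\otimes_A V_0\otimes_A M$ and to zero on the degree-zero part; a direct graded calculation then yields injectivity of $d$ and exactness at the middle. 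Surjectivity of $\mu$ is immediate from $\mu(e_i\otimes m)=m$, and $\mu\circ d = 0$ follows from associativity together with the fact (Proposition~\ref{pro3.1}) that the $\mathbb{K}\mathcal{C}$-action satisfies $vm = M_\alpha(v\otimes m)$ for $v\in\mathbb{K}X(\alpha)$.

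Finally, projectivity of the middle two terms uses both hypotheses. The functor $\mathbb{K}\mathcal{C}\otimes_A -$ is left adjoint to restriction and therefore sends projective $A$-modules to projective $\mathbb{K}\mathcal{C}$-modules, so local projectivity of $M$ (which is exactly projectivity of $M$ as an $A$-module) gives projectivity of $\mathbb{K}\mathcal{C}\otimes_A M$. For the leftmost term it suffices to show $V_0 \otimes_A M$ is projective over $A$; this is where action-freeness enters, since it forces $\mathbb{K}X(\alpha)$ to be free as a left $A_{t(\alpha)}$-module, and then writing $M_{s(\alpha)}$ as a summand of some $A_{s(\alpha)}^n$ exhibits $\mathbb{K}X(\alpha)\otimes_{A_{s(\alpha)}} M_{s(\alpha)}$ as a summand of the free $A_{t(\alpha)}$-module $\mathbb{K}X(\alpha)^n$. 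The main obstacle is the exactness of the standard tensor-algebra resolution, in particular the injectivity of $d$; the bimodule splitting sketched above is the cleanest way to get it, and everything else reduces to direct verification.
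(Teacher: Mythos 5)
Your proposal is correct and follows essentially the same route as the paper: identify $\mathbb{K}\mathcal{C}$ with $T_A(V_0)$ via Proposition~\ref{CWpro1.1}, invoke the standard bimodule resolution $0\to T\otimes_A V_0\otimes_A T\to T\otimes_A T\to T\to 0$ of the tensor algebra (the paper cites Schofield for this), use its right-module splitting to tensor with $M$ and preserve exactness, and then deduce projectivity of the two terms from action-freeness (freeness of $\mathbb{K}X(\alpha)$ over $A_{t(\alpha)}$) together with local projectivity of $M$. The only difference is cosmetic: you additionally sketch a contracting-homotopy alternative for exactness, which the paper does not need.
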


\begin{proof}
	Let $A=\prod_{i\in Q_0}A_i$ and $V=\bigoplus_{\alpha\in Q_1}\mathbb{K}X(\alpha)$. By Proposition~\ref{CWpro1.1}, we have an algebra isomorphism $\mathbb{K}\mathcal{C}\cong T_A(V)$. By \cite[Theorems~10.1 and~10.5]{S}, we have a short exact sequence of $\mathbb{K}\mathcal{C}$-$\mathbb{K}\mathcal{C}$-bimodules
	\begin{equation}\label{equ3.2.2}
		0 \rightarrow \mathbb{K}\mathcal{C}\otimes_A V\otimes_A \mathbb{K}\mathcal{C} \xrightarrow{d'} \mathbb{K}\mathcal{C}\otimes_A \mathbb{K}\mathcal{C} \xrightarrow{\mathrm{mult}} \mathbb{K}\mathcal{C} \rightarrow 0.
		\end{equation}
	Here, $d'(p\otimes v\otimes q)=p\otimes vq-pv\otimes q$; see also \cite{Rog}. It splits as a short exact sequence of right  $\mathbb{K}\mathcal{C}$-modules. Applying $-\otimes_{\mathbb{K}\mathcal{C}} M$ to the sequence (\ref{equ3.2.2}) yields a short exact sequence of left $\mathbb{K}\mathcal{C}$-modules
	\[
		0 \rightarrow \mathbb{K}\mathcal{C}\otimes_A V\otimes_A M \xrightarrow{d} \mathbb{K}\mathcal{C}\otimes_A M \xrightarrow{\mu}  M \rightarrow  0.
	\]
	Here, $d(p\otimes v\otimes m)=p\otimes vm-pv\otimes m$ and $\mu(p\otimes m)=pm$. It is isomorphic to the sequence (\ref{3.1}). 
	
	Since $\mathbb{K}X(\alpha)$ is a free left $A_{t(\alpha)}$-module, we have that $\mathbb{K}\mathcal{C} e_{t(\alpha)}\otimes_{A_{t(\alpha)}}\mathbb{K}X(\alpha)$ is a projective left $\mathbb{K}\mathcal{C}$-module. We have that $\mathbb{K}\mathcal{C} e_{t(\alpha)}\otimes_{A_{t(\alpha)}}\mathbb{K}X(\alpha)\otimes_{A_{s(\alpha)}}M_{s(\alpha)}$ is a projective left $\mathbb{K}\mathcal{C}$-module since $M$ is locally projective. Similarly, for each $i\in Q_0$, $\mathbb{K}\mathcal{C} e_i\otimes_{A_i}M_i$ is also a projective left $\mathbb{K}\mathcal{C}$-module. 
\end{proof}
\subsection{Trace maps}\label{subsec3.2}
In this subsection, we introduce the trace map \cite{Bou,Geu2017}.  

Recall that for a $\K$-algebra $R$ and a finitely generated projective left $R$-module $P$, there exist $x_1,x_2,\cdots,x_m\in P$ and $\theta_1,\theta_2,\cdots,\theta_m\in\Hom_R(P,R)$ such that for each $x\in P,\ x=\sum_{i=1}^m\theta_i(x)x_i$. The elements $x_1,x_2,\cdots,x_m\in P$ and $\theta_1,\theta_2,\cdots,\theta_m\in\mathrm{Hom}_R(P,R)$ are called a \emph{dual basis} of $P$.

Let $R$ be a symmetric $\K$-algebra with a symmetric structure $\varphi:R\to \K$. Here, a symmetric structure \cite[Theorem~16.54]{Lam} means that $\varphi$ is a $\K$-linear map that satisfies the following properties:
\begin{enumerate}[(1)]
	\item $\Ker(\varphi)$ contains no nonzero left ideals of $R$.
	\item $\varphi(rs)=\varphi(sr)$ for all $r,s\in R$.
\end{enumerate}

Following \cite[Chapter~III,~Section~9]{Bou} and \cite[Section~3]{Geu2017}, for a finitely generated projective left $R$-module $P$, we have a $\K$-linear map, which is called the \emph{trace map}:
\[\mathrm{tr}_P:
	\mathrm{End}_R(P)  \tilde{\longrightarrow} \mathrm{Hom}_R(P,R)\otimes_RP\xlongrightarrow{v} R \xlongrightarrow{\varphi} \K.\]
Here, the isomorphism is the inverse of $\mathrm{Hom}_R(P,R)\otimes_RP\tilde{\rightarrow}\mathrm{End}_R(P)$, $f\otimes x\mapsto (y\mapsto f(y)x)$ and $v(g\otimes x)=g(x)$, for any $g\in \mathrm{Hom}_R(P,R)$ and $x\in P.$ 

Let $x_1,x_2,\cdots,x_m\in P$ and $\theta_1,\theta_2,\cdots,\theta_m\in\mathrm{Hom}_R(P,R)$ be a dual basis of $P$. Then for any $f\in\mathrm{End}_R(P)$, we have
\[\mathrm{tr}_P(f)=\sum_{i=1}^m\varphi\big(\theta_i(f(x_i))\big).\]

The following lemma is well known; see \cite[Lemma~3.2.11]{Geu2017}.
\begin{lem}\label{lem3.5}
	Let $R$ be a symmetric algebra with a symmetric structure $\varphi:R\rightarrow\K$. For $M\in R$-$\mathrm{mod}$ and $P\in R$-$\proj$, we have a natural $\K$-linear isomorphism
	\begin{equation}\label{iso3.2}
		\mathfrak{t}_{M,P}:\mathrm{Hom}_R(M,P)\longrightarrow D\mathrm{Hom}_R(P,M),\ f\longmapsto (g\mapsto\mathrm{tr}_P(f\circ g)).
		\end{equation}
\end{lem}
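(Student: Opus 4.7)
The plan is to reduce to the case $P = R$ and then to identify $\mathfrak{t}_{M,R}$ with the classical natural isomorphism $\Hom_R(M,R) \cong D(M)$ that is available for any symmetric algebra.

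First, I would verify that $\mathfrak{t}_{M,-}$ is natural and additive in $P$. For a homomorphism $h \colon P \to P'$ between finitely generated projective $R$-modules, the cyclic identity $\mathrm{tr}_{P'}(h \circ \psi) = \mathrm{tr}_P(\psi \circ h)$ (for $\psi \colon P' \to P$) translates into commutativity of the squares that express naturality of $\mathfrak{t}_{M,-}$ in $P$. In particular, for $P = P_1 \oplus P_2$ the map $\mathfrak{t}_{M,P}$ splits as $\mathfrak{t}_{M,P_1} \oplus \mathfrak{t}_{M,P_2}$. Since every finitely generated projective is a direct summand of some $R^n$, and since being an isomorphism is inherited by direct summands, it is enough to show that $\mathfrak{t}_{M,R}$ is a $\K$-linear isomorphism.

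For $P = R$ I would use the dual basis $x_1 = 1 \in R$ and $\theta_1 = \mathrm{id}_R$, which yields the simple formula $\mathrm{tr}_R(\phi) = \varphi(\phi(1))$ for $\phi \in \End_R(R)$. Combined with the tautological isomorphism $\Hom_R(R,M) \xrightarrow{\sim} M$, $g \mapsto g(1)$, the map $\mathfrak{t}_{M,R}$ identifies with
\[\Hom_R(M,R) \longrightarrow D(M), \qquad f \longmapsto \varphi \circ f.\]
This is the standard map present for every symmetric algebra. Injectivity uses condition (1) in the definition of a symmetric structure: if $\varphi \circ f = 0$, then $\varphi(r f(m)) = \varphi(f(rm)) = 0$ for all $r \in R$ and $m \in M$, so $\varphi$ kills the left ideal $R f(m)$ for every $m$, forcing $f = 0$. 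Surjectivity follows by a dimension count: condition (2) makes $R \to D(R)$, $r \mapsto \varphi(r \cdot -)$, an $R$-bimodule isomorphism, whence $\dim_\K \Hom_R(M,R) = \dim_\K D(M)$.

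The main obstacle is the careful bookkeeping in the reduction step, where one must confirm that $\mathfrak{t}_{M,-}$ really is a natural transformation of additive functors on the category of finitely generated projective $R$-modules. This rests on the cyclic and multiplicative properties of the trace map when composed with morphisms between distinct projectives, together with the functoriality of the dual basis identification $\Hom_R(P,R) \otimes_R P \xrightarrow{\sim} \End_R(P)$. Once this reduction is in place, the $P = R$ case follows immediately from the defining conditions on $\varphi$.
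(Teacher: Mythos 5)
Your proposal is correct, but it takes a genuinely different route from the paper. The paper proves the lemma for arbitrary $P\in R$-$\proj$ in one stroke, by exhibiting $\mathfrak{t}_{M,P}$ as the composite of five canonical isomorphisms
\[
\Hom_R(M,P)\cong\Hom_R(M,R\otimes_RP)\cong\Hom_R(M,D(R)\otimes_RP)\cong\Hom_R(M,D\Hom_R(P,R))\cong D(\Hom_R(P,R)\otimes_RM)\cong D\Hom_R(P,M),
\]
where the second step uses the bimodule isomorphism $R\cong D(R)$, $r\mapsto(s\mapsto\varphi(rs))$, and the third uses that $P$ is finitely generated projective; naturality in both variables is then automatic, and the only work is to check that the composite agrees with the trace formula. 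You instead reduce to $P=R$ by naturality and additivity of $\mathfrak{t}_{M,-}$ and then verify the rank-one case by hand. Your reduction is sound: the required cyclic identity $\mathrm{tr}_{P'}(h\circ\psi)=\mathrm{tr}_P(\psi\circ h)$ for maps between distinct projectives does hold (it is essentially \cite[Lemma~3.2.13]{Geu2017}, which the paper invokes elsewhere), the off-diagonal blocks of $\mathfrak{t}_{M,P_1\oplus P_2}$ vanish by that same identity applied to $\pi_2\circ\iota_1=0$, and a direct summand of an isomorphism of the form $u\oplus v$ is an isomorphism. The case $P=R$ is handled correctly, though note that surjectivity of $R\to D(R)$ uses condition (1) (injectivity plus equal finite dimensions), not condition (2) alone, and the final dimension count needs $M$ finite-dimensional, which is guaranteed by the paper's convention that $R$-$\mathrm{mod}$ consists of finite-dimensional modules. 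What each approach buys: the paper's chain makes naturality (in $M$ and $P$) visible with no extra argument and avoids the cyclic trace identity altogether, while your reduction localizes all the content of the symmetric structure in the transparent rank-one computation $f\mapsto\varphi\circ f$, at the cost of the bookkeeping you yourself flag.
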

\begin{proof}
	In fact, the map $\mathfrak{t}_{M,P}$ coincides with the composition of the following isomorphisms:
	\[\begin{aligned}
		\mathrm{Hom}_R(M,P) &\xlongrightarrow{\sim}\mathrm{Hom}_R(M,R\otimes_RP)\\ &\xlongrightarrow{\sim} \mathrm{Hom}_R(M,D(R)\otimes_RP))\\ &\xlongrightarrow{\sim} \mathrm{Hom}_R(M,D\mathrm{Hom}_R(P,R)))\\ & \xlongrightarrow{\sim}D(\mathrm{Hom}_R(P,R)\otimes_RM)\\ & 
	\xlongrightarrow{\sim}D\mathrm{Hom}_R(P,M).
	\end{aligned}\]
	Here, the first isomorphism is induced by the canonical isomorphism $P\xrightarrow{\sim} R\otimes_R P$, the second one is induced by the bimodule isomorphism $R\xrightarrow{\sim}D(R),\;r\mapsto(s\mapsto\varphi(rs))$, the third one is induced by the canonical isomorphism $D(R)\otimes_RP\xrightarrow{\sim}D\mathrm{Hom}_R(P,R), f\otimes p\mapsto(g\mapsto fg(p))$, the fourth one is the adjunction isomorphism, and the last one is induced by the isomorphism $\mathrm{Hom}_R(P,R)\otimes_RM\xrightarrow{\sim}\mathrm{Hom}_R(P,M),\;f\otimes x\mapsto (y\mapsto f(y)x)$.
\end{proof}

Recall that $A_i=\mathbb{K}X(i)$ for each $i\in Q_0$. Let $\varphi_i:A_i\rightarrow \K$, $\sum_{g\in X(i)}a_gg\mapsto a_1$. It is a symmetric structure of the group algebra $A_i$. By \cite[Lemma~2.2.25]{Geu2017}, we have an $A_i$-$A_j$-bimodule isomorphism for each arrow $\alpha:i\rightarrow j$ in $Q$:
\begin{equation}\label{equ3.2}
\mathrm{Hom}_{A_j}(\mathbb{K}X(\alpha),A_j)\longrightarrow\mathrm{Hom}_{\K}(\mathbb{K}X(\alpha),\K),\ \  f\mapsto\varphi_j\circ f.
\end{equation}
For an arrow $\alpha:i\rightarrow j$ in $Q$, $\mathbb{K}X(\alpha)$ is a free left $A_j$-module. Therefore, for a left $A_j$-module $N_j$, we have a functorial isomorphism
\begin{equation}\label{equ3.4}
\mathrm{Hom}_{A_j}(\mathbb{K}X(\alpha),N_j)\longrightarrow\mathrm{Hom}_{A_j}(\mathbb{K}X(\alpha),A_j)\otimes_{A_j}N_j.
\end{equation}
Combining the maps (\ref{equ3.2}), (\ref{equ3.4}) and (\ref{iso1}), we get a natural isomorphism of $A_i$-modules
\begin{equation}\label{equ3.5}\mathrm{Hom}_{A_j}(\mathbb{K}X(\alpha),N_j)\longrightarrow \mathbb{K}X(\alpha)^\ast\otimes_{A_j}N_j,\ f\mapsto\sum_{b\in L_\alpha}b^\ast\otimes f(b).\end{equation}
Here, $L_\alpha$ is a representative set of $X(t(\alpha))$-orbits of $X(\alpha)$.

Let $M_i$ be a left $A_i$-module. By the adjunction isomorphism and $(\ref{equ3.5})$, we get a functorial isomorphism of $\K$-vector spaces:
\[\mathrm{ad}_{\alpha}(M_i,N_j):\mathrm{Hom}_{A_j}(\mathbb{K}X(\alpha)\otimes_{A_i}M_i,N_j)\rightarrow\mathrm{Hom}_{A_i}(M_i,\mathbb{K}X(\alpha)^\ast\otimes_{A_j}N_j),\;f\mapsto f^\vee.\]
Here, for any $m\in M_i$, we have
\begin{equation}\label{equ3.6}
f^\vee(m)=\sum_{b\in L_\alpha}b^\ast\otimes f(b\otimes m).
\end{equation}
The inverse of $\mathrm{ad}_{\alpha}(M_i,N_j)$ is given by
\[\begin{matrix}\mathrm{Hom}_{A_i}(M_i,\mathbb{K}X(\alpha)^\ast\otimes_{A_j}N_j)&\longrightarrow&\mathrm{Hom}_{A_j}(\mathbb{K}X(\alpha)\otimes_{A_i}M_i,N_j)\\
	g&\longmapsto&\left(g^\wedge:v\otimes m\mapsto\sum_{b\in L_\alpha}b^\ast(v)g_{m,b}\right),\end{matrix}\]
where the elements $g_{m,b}\in N_j$ are uniquely determined by
\[g(m)=\sum_{b\in L_\alpha}b^\ast\otimes g_{m,b}.\] 


Let $M\in\mathrm{rep}(Q,X)$ and $\mathcal{C}=\mathcal{C}(Q,X)$. We have the following homomorphism of left $\mathbb{K}\mathcal{C}$-modules
\[\Phi:\bigoplus\limits_{\alpha\in Q_1}\mathrm{Hom}_{A_{s(\alpha)}}(\mathbb{K}X(\alpha)^\ast\otimes_{A_{t(\alpha)}}e_{t(\alpha)}\mathbb{K}\mathcal{C},M_{s(\alpha)})\rightarrow\bigoplus\limits_{i\in Q_0}\mathrm{Hom}_{A_i}(e_{i}\mathbb{K}\mathcal{C},M_i),\]
which sends $(\phi_\alpha)_{\alpha\in Q_1}$ to 
\[\left(x_i\mapsto\sum_{\beta\in Q_1\atop t(\beta)=i}M_{\beta}\big(\phi_\beta^\vee(x_i)\big)-\sum_{\alpha\in Q_1\atop s(\alpha)=i}\sum_{c\in L_\alpha}\phi_\alpha(c^\ast\otimes cx_i)\right)_{i\in Q_0}.\]
Here, for $x_i\in e_i\mathbb{K}\mathcal{C}$, $cx_i$ means their multiplication in $\mathbb{K}\mathcal{C}$, and we recall $M_\alpha:\mathbb{K}X(\alpha)\otimes_{A_{s(\alpha)}}M_{s(\alpha)}\rightarrow M_{t(\alpha)}$.

We have the following homomorphism of right $\mathbb{K}\mathcal{C}$-modules
\[\Psi:\bigoplus\limits_{i\in Q_0}\mathrm{Hom}_{A_i}(M_i,e_i\mathbb{K}\mathcal{C})	\rightarrow\bigoplus\limits_{\alpha\in Q_1}\mathrm{Hom}_{A_{t(\alpha)}}(\mathbb{K}X(\alpha)\otimes_{A_{s(\alpha)}}M_{s(\alpha)},e_{t(\alpha)}\mathbb{K}\mathcal{C})\]
given by
\[(\psi_i)_{i\in Q_0}\mapsto\big(x\otimes m\mapsto\psi_{t(\alpha)}(M_\alpha(x\otimes m))- x\psi_{s(\alpha)}(m)\big)_{\alpha\in Q_1}.\]
Here, $x\psi_{s(\alpha)}(m)$ means their multiplication in $\mathbb{K}\mathcal{C}$.

Recall the isomorphism $\mathfrak{t}$ in Lemma~\ref{lem3.5}. The following result is analogous to \cite[Proposition~8.3]{GLS}. The commutative diagram implies that $D(\Psi)$ can be identified with $\Phi$, which is prepared for Section~\ref{sec4}.

\begin{pro}\label{lem3.10}
	Let $M\in\mathrm{rep}_{l.p.}(Q,X)$ and $\mathcal{C}=\mathcal{C}(Q,X)$. Then we have the following commutative diagram.
	\[\begin{tikzcd}[column sep=large,row sep=huge]
		\bigoplus\limits_{\alpha\in Q_1}\mathrm{Hom}_{A_{s(\alpha)}}(\mathbb{K}X(\alpha)^\ast\otimes_{A_{t(\alpha)}}e_{t(\alpha)}\mathbb{K}\mathcal{C},M_{s(\alpha)}) \arrow[r, "\Phi"] \arrow[d, "\bigoplus\limits_{\alpha\in Q_1}\mathfrak{t}_{\mathbb{K}X(\alpha)^\ast\otimes_{A_{t(\alpha)}}e_{t(\alpha)}\mathbb{K}\mathcal{C},M_{s(\alpha)}}"swap] &\bigoplus\limits_{i\in Q_0}\mathrm{Hom}_{A_i}(e_{i}\mathbb{K}\mathcal{C},M_i) \arrow[dd, "\bigoplus\limits_{i\in Q_0}\mathfrak{t}_{e_{i}\mathbb{K}\mathcal{C},M_i}"] \\
		\bigoplus\limits_{\alpha\in Q_1}D\mathrm{Hom}_{A_{s(\alpha)}}(M_{s(\alpha)},\mathbb{K}X(\alpha)^\ast\otimes e_{t(\alpha)}\mathbb{K}\mathcal{C}) \arrow[d, "{\bigoplus\limits_{\alpha\in Q_1}D(\mathrm{ad}_\alpha(M_{s(\alpha)},e_{t(\alpha)}\mathbb{K}\mathcal{C}))}"swap]                  &                         \\
		\bigoplus\limits_{\alpha\in Q_1}D\mathrm{Hom}_{A_{t(\alpha)}}(\mathbb{K}X(\alpha)\otimes_{A_{s(\alpha)}}M_{s(\alpha)},e_{t(\alpha)}\mathbb{K}\mathcal{C}) \arrow[r, "D(\Psi)"]              & \bigoplus\limits_{i\in Q_0}D\mathrm{Hom}_{A_i}(M_i,e_i\mathbb{K}\mathcal{C})                   
	\end{tikzcd}\]
\end{pro}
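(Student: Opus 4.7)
The plan is to reduce the commutativity of the diagram to a pointwise scalar identity. Both composites land in $\bigoplus_{i\in Q_0}D\mathrm{Hom}_{A_i}(M_i,e_i\mathbb{K}\mathcal{C})$, so it suffices to fix an arbitrary family $(\psi_i)_{i\in Q_0}\in\bigoplus_i\mathrm{Hom}_{A_i}(M_i,e_i\mathbb{K}\mathcal{C})$ and verify that the two induced scalars agree. Using the explicit form $\mathfrak{t}_{M,P}(f)(g)=\mathrm{tr}_P(f\circ g)$ from Lemma~\ref{lem3.5} and unpacking the definition of $\Phi$, the route through the top-right produces
\[
\sum_{\beta\in Q_1}\mathrm{tr}_{M_{t(\beta)}}\bigl(M_\beta\circ\phi_\beta^\vee\circ\psi_{t(\beta)}\bigr)-\sum_{\alpha\in Q_1}\mathrm{tr}_{M_{s(\alpha)}}\Bigl(m\mapsto\sum_{c\in L_\alpha}\phi_\alpha(c^\ast\otimes c\,\psi_{s(\alpha)}(m))\Bigr),
\]
whereas the route through the bottom-left produces $\sum_{\alpha\in Q_1}\mathrm{tr}_{M_{s(\alpha)}}\bigl(\phi_\alpha\circ(\Psi(\psi)_\alpha)^\vee\bigr)$. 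By formula~(\ref{equ3.6}), the latter splits according to the two terms in the definition of $\Psi(\psi)_\alpha$, and pulling $\phi_\alpha$ inside the sum on $c\in L_\alpha$ shows that its ``negative'' part agrees literally with the second sum above.

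After cancelling the negative parts, the commutativity reduces to the single cyclicity identity
\[
\mathrm{tr}_{M_{t(\beta)}}\bigl(M_\beta\circ\phi_\beta^\vee\circ\psi_{t(\beta)}\bigr)=\mathrm{tr}_{M_{s(\beta)}}\bigl(\phi_\beta\circ(1\otimes\psi_{t(\beta)})\circ M_\beta^\vee\bigr)
\]
for each arrow $\beta\in Q_1$, where $M_\beta^\vee=\mathrm{ad}_\beta(M_\beta)\colon M_{s(\beta)}\to\mathbb{K}X(\beta)^\ast\otimes_{A_{t(\beta)}}M_{t(\beta)}$ is given by $m\mapsto\sum_{c\in L_\beta}c^\ast\otimes M_\beta(c\otimes m)$. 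I would prove this by choosing dual bases of the projective $A_{t(\beta)}$-module $M_{t(\beta)}$ and the projective $A_{s(\beta)}$-module $M_{s(\beta)}$, expanding each trace via the symmetric structures $\varphi_{t(\beta)}$ and $\varphi_{s(\beta)}$, and substituting the explicit formulas $\phi_\beta^\vee(x)=\sum_{b\in R_\beta}b\otimes\phi_\beta(b^\ast\otimes x)$ and the one above for $M_\beta^\vee$. The resulting double sums match because the Casimir-type element $\sum_{b\in R_\beta}b\otimes b^\ast\in\mathbb{K}X(\beta)\otimes_{A_{s(\beta)}}\mathbb{K}X(\beta)^\ast$ is $A_{t(\beta)}$-central, a consequence of the action-freeness of $X$ together with the Frobenius pairing $\mathbb{K}X(\beta)\otimes_{A_{s(\beta)}}\mathbb{K}X(\beta)^\ast\to A_{t(\beta)}$ furnished by the bimodule isomorphism (\ref{equ3.2}).

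The main obstacle is precisely this cyclicity identity: conceptually it expresses that trace is preserved across the biadjoint pair $\bigl(\mathbb{K}X(\beta)\otimes_{A_{s(\beta)}}-,\;\mathbb{K}X(\beta)^\ast\otimes_{A_{t(\beta)}}-\bigr)$ of functors between $A_{s(\beta)}$-$\mathrm{mod}$ and $A_{t(\beta)}$-$\mathrm{mod}$, and its verification requires a careful matching of the two symmetric forms $\varphi_{t(\beta)}$ and $\varphi_{s(\beta)}$ through the bimodule $\mathbb{K}X(\beta)$. All remaining manipulations---reducing to a pairing with $(\psi_i)_{i\in Q_0}$, unpacking $\Phi$, $\Psi$ and $\mathrm{ad}_\alpha$ via (\ref{equ3.6}), and matching the negative parts---are routine bookkeeping supplied by Lemma~\ref{lem3.5} and the definitions in Section~\ref{subsec3.2}.
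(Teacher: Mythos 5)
Your proposal is correct and follows essentially the same route as the paper: reduce commutativity to the per-arrow trace identity $\mathrm{tr}_{s(\alpha)}(\phi\circ\Psi(f)^\vee)=\mathrm{tr}_{t(\alpha)}(M_\alpha\circ\phi^\vee\circ f)$, dispose of the $s(\alpha)=i$ terms by direct matching, and settle the remaining case by compatibility of the trace with the biadjoint pair $(\mathbb{K}X(\alpha)\otimes_{A_{s(\alpha)}}-,\ \mathbb{K}X(\alpha)^\ast\otimes_{A_{t(\alpha)}}-)$. The only difference is that the paper imports this last compatibility from \cite[Lemmas~3.2.11 and 3.2.13]{Geu2017}, whereas you propose to verify it by hand via dual bases and the centrality of $\sum_{b\in R_\alpha}b\otimes b^\ast$, which is a correct and self-contained substitute.
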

\begin{proof}
	We write $\mathrm{tr}_{j}=\mathrm{tr}_{M_j}$ for each $j\in Q_0$. Fix an arrow $\alpha\in Q_1$ and $i\in Q_0$. For $f\in\mathrm{Hom}_{A_i}(M_i,e_i\mathbb{K}\mathcal{C})$, we have
	\[\Psi(f)(x\otimes m)=\begin{cases}
		-xf(m),&\mbox{if }s(\alpha)=i;\\
		f(M_\alpha(x\otimes m)),&\mbox{if }t(\alpha)=i;\\
		0,&\mbox{otherwise};
	\end{cases}\]
 for $x\in \mathbb{K}X(\alpha)$ and $m\in M_{s(\alpha)}$. Here, $f$ is regarded as an element of $\bigoplus_{j\in Q_0}\mathrm{Hom}_{A_j}(M_j,e_j\mathbb{K}\mathcal{C})$ with the $i$-component $f$. We have
	\[\Phi(\phi)(y)=\begin{cases}
		-\sum_{c\in L_\alpha}\phi(c^\ast\otimes cy),&\mbox{if }s(\alpha)=i;\\
		M_\alpha(\phi^\vee(y)),&\mbox{if }t(\alpha)=i;\\
		0,&\mbox{otherwise};
	\end{cases}\]
for $y\in e_i\mathbb{K}\mathcal{C}$. Here, $\phi$ is regarded as an element of $\bigoplus_{\beta\in Q_1}\mathrm{Hom}_{A_{s(\beta)}}(\mathbb{K}X(\beta)^\ast\otimes_{A_{t(\beta)}}e_{t(\beta)}\mathbb{K}\mathcal{C},M_{s(\beta)})$ with the $\alpha$-component $\phi$. 

We only need to prove
	\[\mathrm{tr}_{s(\alpha)}(\phi\circ \Psi(f)^\vee)=\mathrm{tr}_i(\Phi(\phi)\circ f).\]
Here, the right side of the equality is the composition of the upper right corner morphisms of the diagram, and the left side is the composition of the lower left corner morphisms. 

If $s(\alpha)=i$, by (\ref{equ3.6}) we have 
\[\mathrm{tr}_{s(\alpha)}(\phi\circ \Psi(f)^\vee)=-\sum_{c\in L_\alpha}\mathrm{tr}_i(\phi(c^\ast\otimes cf(-)))=\mathrm{tr}_i(\Phi(\phi)\circ f).\]

If $t(\alpha)=i$, by \cite[Lemmas~3.2.11 and ~3.2.13]{Geu2017} we have
\[\begin{aligned}\mathrm{tr}_{s(\alpha)}(\phi\circ \Psi(f)^\vee)
	&=\mathrm{tr}_i(\phi^\vee\circ \Psi(f))\\ &=\mathrm{tr}_i(\phi^\vee \circ f\circ M_\alpha)\\
	&=\mathrm{tr}_i(M_\alpha\circ \phi^\vee\circ f)\\
	&=\mathrm{tr}_i(\Phi(\phi)\circ f).
\end{aligned}\]	
The remaining case is trivial. Then we are done.
\end{proof}

\section{Bimodules and the preprojective algebra}\label{sec4}
In this section, we describe $\Hom_{\mathbb{K}\mathcal{C}}(\Pi(Q,X)_1,M)$ for any $\mathbb{K}\mathcal{C}$-module $M$ and prove Proposition~\ref{pro4.2}.

\subsection{The bimodule $\Pi(Q,X)_1$}
 
 Recall that $\Pi(Q,X)_1$ is the sub $\mathbb{K}\mathcal{C}$-$\mathbb{K}\mathcal{C}$-bimodule of $\Pi(Q,X)$ generated by $X(\alpha)^\ast$ for all $\alpha\in Q_1$. 
 
Let $M\in\mathrm{rep}(Q,X)$. Recall that we have the left $\mathbb{K}\mathcal{C}$-module homomorphism\[\Phi:\bigoplus\limits_{\alpha\in Q_1}\mathrm{Hom}_{A_{s(\alpha)}}(\mathbb{K}X(\alpha)^\ast\otimes_{A_{t(\alpha)}}e_{t(\alpha)}\mathbb{K}\mathcal{C},M_{s(\alpha)})\rightarrow\bigoplus\limits_{i\in Q_0}\mathrm{Hom}_{A_i}(e_{i}\mathbb{K}\mathcal{C},M_i),\]
which sends $(\phi_\alpha)_{\alpha\in Q_1}$ to 
\[\left(x_i\mapsto\sum_{\beta\in Q_1\atop t(\beta)=i}M_{\beta}\big(\phi_\beta^\vee(x_i)\big)-\sum_{\alpha\in Q_1\atop s(\alpha)=i}\sum_{c\in L_\alpha}\phi_\alpha(c^\ast\otimes cx_i)\right)_{i\in Q_0}.\]

\begin{lem}\label{lem4.1}
	Keep the notation as above. For any $M\in\mathrm{rep}(Q,X)$, we have a left $\mathbb{K}\mathcal{C}$-module isomorphism $\mathrm{Hom}_{\mathbb{K}\mathcal{C}}(\Pi(Q,X)_1,M)\cong\mathrm{Ker}(\Phi)$.
\end{lem}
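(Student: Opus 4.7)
The plan is to apply $\mathrm{Hom}_{\mathbb{K}\mathcal{C}}(-, M)$ to the presentation from Proposition~\ref{pro2.6}(1), namely the short exact sequence
\[
0 \to \mathbb{K}\mathcal{C}\rho'\mathbb{K}\mathcal{C} \to \mathbb{K}\mathcal{C} \otimes_A V_1 \otimes_A \mathbb{K}\mathcal{C} \to \Pi(Q,X)_1 \to 0,
\]
so that $\mathrm{Hom}_{\mathbb{K}\mathcal{C}}(\Pi(Q,X)_1, M)$ is realized as the kernel of the resulting pullback map, and then to identify that map with $\Phi$ after an appropriate adjunction.

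The first step is the Hom-tensor adjunction together with the direct-sum decomposition $V_1 = \bigoplus_{\alpha \in Q_1}\mathbb{K}X(\alpha)^\ast$. Combined with the obvious reduction of the $A$-action to the relevant component (using the orthogonal idempotents $e_j$), it produces a natural left $\mathbb{K}\mathcal{C}$-linear bijection
\[
\mathrm{Hom}_{\mathbb{K}\mathcal{C}}(\mathbb{K}\mathcal{C} \otimes_A V_1 \otimes_A \mathbb{K}\mathcal{C}, M) \;\xrightarrow{\sim}\; \bigoplus_{\alpha \in Q_1}\mathrm{Hom}_{A_{s(\alpha)}}\!\bigl(\mathbb{K}X(\alpha)^\ast \otimes_{A_{t(\alpha)}} e_{t(\alpha)}\mathbb{K}\mathcal{C},\, M_{s(\alpha)}\bigr),
\]
sending $\phi$ to the family $(\phi_\alpha)_\alpha$ with $\phi_\alpha(u \otimes z) = \phi(1 \otimes u \otimes z)$. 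The right-hand side is precisely the domain of $\Phi$.

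Next, I decompose $\rho' = \sum_{i \in Q_0}\rho'_i$ with
\[
\rho'_i = \sum_{\alpha:\, t(\alpha) = i}\sum_{b \in R_\alpha} b \otimes b^\ast \otimes 1 \;-\; \sum_{\beta:\, s(\beta) = i}\sum_{c \in L_\beta} 1 \otimes c^\ast \otimes c.
\]
A short calculation with the $A$-bimodule structures shows that each summand of $\rho'_i$ lies in $e_i \cdot (\mathbb{K}\mathcal{C} \otimes_A V_1 \otimes_A \mathbb{K}\mathcal{C}) \cdot e_i$: for instance $b^\ast \otimes 1 = b^\ast \otimes e_i$ since $b^\ast \in e_{s(\alpha)} V_1 e_i$, and $c \in e_{t(\beta)}\mathbb{K}\mathcal{C} e_i$. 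Hence $\mathbb{K}\mathcal{C}\rho'\mathbb{K}\mathcal{C}$ is generated as a left $\mathbb{K}\mathcal{C}$-module by $\{\rho'_i \cdot z : i \in Q_0,\; z \in e_i\mathbb{K}\mathcal{C}\}$, and $\phi$ factors through $\Pi(Q,X)_1$ precisely when $\phi(\rho'_i \cdot z) = 0$ for all such $i$ and $z$. The final check, matching $\phi(\rho'_i \cdot z)$ with the $i$-th component of $\Phi((\phi_\alpha))(z)$, is a direct computation: left $\mathbb{K}\mathcal{C}$-linearity gives $\phi(b \otimes b^\ast \otimes z) = M_\alpha(b \otimes \phi_\alpha(b^\ast \otimes z))$, and summing over $b \in R_\alpha$ produces $M_\alpha(\phi_\alpha^\vee(z))$ once one writes formula (\ref{equ3.6}) for the arrow $\alpha^\ast$ and uses the equality of orbit representatives $L_{\alpha^\ast} = \{b^\ast : b \in R_\alpha\}$; the other half reads $\phi(1 \otimes c^\ast \otimes cz) = \phi_\beta(c^\ast \otimes cz)$ directly.

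The main obstacle I anticipate is the orbit-representative bookkeeping: the relation $\rho'$ is naturally expressed using $R_\alpha$, whereas the adjoint formula (\ref{equ3.6}) for $\phi_\alpha^\vee$ is written using $L_{\alpha^\ast}$. Reconciling the two through the biset duality, where the action-free hypothesis of Section~\ref{sec3} enters via (\ref{equ3.5}), is the delicate point that must be handled carefully; once it is in place, the identification of $\phi$ with $(\phi_\alpha)_\alpha$ pushed forward through $\Phi$ is essentially automatic.
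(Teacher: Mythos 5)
Your proposal is correct and follows essentially the same route as the paper: both apply $\mathrm{Hom}_{\mathbb{K}\mathcal{C}}(-,M)$ to the presentation of $\Pi(Q,X)_1$ from Proposition~\ref{pro2.6}(1) and identify the induced map with $\Phi$ via the adjunction, the only cosmetic difference being that the paper packages the relations as the image of an explicit bimodule map $\partial:\bigoplus_{i}\mathbb{K}\mathcal{C}e_i\otimes_{A_i}e_i\mathbb{K}\mathcal{C}\to\mathbb{K}\mathcal{C}\otimes_A V_1\otimes_A\mathbb{K}\mathcal{C}$ with $\partial(e_i\otimes e_i)=\rho'_i$, whereas you work directly with the left-module generators $\rho'_i\cdot z$. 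Your handling of the delicate point --- that $L_{\alpha^\ast}$ may be taken to be $\{b^\ast: b\in R_\alpha\}$ so that $\sum_{b\in R_\alpha}\phi(b\otimes b^\ast\otimes z)=M_\alpha(\phi_\alpha^\vee(z))$ --- is exactly what makes the identification with $\Phi$ go through.
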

\begin{proof}
	Write $\Pi(Q,X)_1=\Pi_1.$ By Proposition~\ref{pro2.6}~(1), we have a $\mathbb{K}\mathcal{C}$-$\mathbb{K}\mathcal{C}$-bimodule isomorphism $\Pi_1\cong\mathbb{K}\mathcal{C}\otimes_A\otimes V_1\otimes_A\mathbb{K}\mathcal{C}/\mathbb{K}\mathcal{C}\rho'\mathbb{K}\mathcal{C}$. We mention that $\mathbb{K}\mathcal{C}\rho'\mathbb{K}\mathcal{C}$ is the sub $\mathbb{K}\mathcal{C}$-bimodule of $\mathbb{K}\mathcal{C}\otimes_A\otimes V_1\otimes_A\mathbb{K}\mathcal{C}$ generated by $$\sum_{\alpha\in Q_1\atop t(\alpha)=i}\sum_{b\in R_\alpha}b\otimes b^\ast\otimes e_i-\sum_{\beta\in Q_1\atop s(\beta)=i}\sum_{c\in L_\beta}e_i\otimes c^\ast\otimes c$$ for all $i\in Q_0$. Then we have a right exact sequence of $\mathbb{K}\mathcal{C}$-bimodule
	\begin{equation}\label{equ4.1}
		\mathbb{K}\mathcal{C}\otimes_A\mathbb{K}\mathcal{C}\xlongrightarrow{\partial}\mathbb{K}\mathcal{C}\otimes_A V_1\otimes_A\mathbb{K}\mathcal{C}\longrightarrow \Pi_1\longrightarrow 0,
	\end{equation}
	which $\partial$ is given by $$\partial(e_i\otimes e_i)=\sum_{\alpha\in Q_1\atop t(\alpha)=i}\sum_{b\in R_\alpha}b\otimes b^\ast\otimes e_i-\sum_{\beta\in Q_1\atop s(\beta)=i}\sum_{c\in L_\beta}e_i\otimes c^\ast\otimes c.$$
	The sequence (\ref{equ4.1}) is isomorphic to the following sequence
	\begin{equation*}
		\bigoplus_{i\in Q_0}\mathbb{K}\mathcal{C}e_i\otimes_{A_i}e_i\mathbb{K}\mathcal{C}\xlongrightarrow{\partial}\bigoplus_{\alpha\in Q_1}\mathbb{K}\mathcal{C}e_{s(\alpha)}\otimes_{A_{s(\alpha)}} \mathbb{K}X(\alpha)^\ast\otimes_{A_{t(\alpha)}}e_{t(\alpha)}\mathbb{K}\mathcal{C}\longrightarrow \Pi_1\longrightarrow 0.
	\end{equation*}
	Applying $\mathrm{Hom}_{\mathbb{K}\mathcal{C}}(-,M)$ to this sequence and using the adjunction isomorphism, we get the following commutative diagram.
	\[\begin{tikzcd}
		\bigoplus\limits_{\alpha\in Q_1}\mathrm{Hom}_{\mathbb{K}\mathcal{C}}(\mathbb{K}\mathcal{C}e_{s(\alpha)}\otimes_{A_{s(\alpha)}} \mathbb{K}X(\alpha)^\ast\otimes_{A_{t(\alpha)}}e_{t(\alpha)}\mathbb{K}\mathcal{C},M) \arrow[r, "{\partial^\ast}"] \arrow[d,"\cong"] & \bigoplus\limits_{i\in Q_0}\mathrm{Hom}_{\mathbb{K}\mathcal{C}}(\mathbb{K}\mathcal{C}e_i\otimes_{A_i}e_i\mathbb{K}\mathcal{C},M) \arrow[d,"\cong"] \\
		\bigoplus\limits_{\alpha\in Q_1}\mathrm{Hom}_{A_{s(\alpha)}}(\mathbb{K}X(\alpha)^\ast\otimes_{A_{t(\alpha)}}e_{t(\alpha)}\mathbb{K}\mathcal{C},M_{s(\alpha)}) \arrow[r, "\Phi"]                                           & \bigoplus\limits_{i\in Q_0}\mathrm{Hom}_{A_i}(e_{i}\mathbb{K}\mathcal{C},M_i)                   
	\end{tikzcd}\]
	Therefore, we have $\mathrm{Hom}_{\mathbb{K}\mathcal{C}}(\Pi_1,M)\cong\Ker(\partial^\ast)\cong\Ker(\Phi)$.
\end{proof}
\subsection{Auslander-Reiten translations}

Let $A$ be a finite dimensional $\mathbb{K}$-algebra, and let $\mathcal{P}^{\le 1}(A)$ be the full subcategory of $A$-$\mathrm{mod}$ consisting of all $A$-modules $M$ with $\projdim(M)\le 1$. Then the Auslander-Reiten translation $\tau:\mathcal{P}^{\le 1}\rightarrow A$-$\mathrm{mod}$ is a functor; see \cite[Proposition~IV.1.13]{ARS}. Similarly, let $\mathcal{I}^{\le 1}(A)$ be the full subcategory of $A$-$\mathrm{mod}$ consisting of all $A$-modules $N$ with $\injdim(N)\le 1$. Then $\tau^-:\mathcal{I}^{\le 1}(A)\rightarrow A$-$\mathrm{mod}$ is a functor. By Proposition~\ref{pro3.3} , we have functors $\tau, \tau^-:\mathrm{rep}_{l.p.}(Q,X)\rightarrow\mathrm{rep}(Q,X)$.

In this subsection, we prove the following result; compare \cite[Theorem~10.1]{GLS}.

\begin{pro}\label{pro4.2}
	Let $(Q,X)$ be a finite acyclic EI quiver with an action-free assignment $X$. For $M\in\mathrm{rep}_{l.p.}(Q,X)$, we have functorial isomorphisms
	\[\mathrm{Hom}_{\mathbb{K}\mathcal{C}}(\Pi(Q,X)_1,M)\cong \tau(M)\mbox{ and }\Pi(Q,X)_1\otimes_{\mathbb{K}\mathcal{C}} M\cong\tau^-(M).\]
\end{pro}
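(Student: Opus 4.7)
The plan is to compute $\tau(M)$ directly from the projective resolution of Lemma~\ref{lem3.2}, translate the differential through the trace machinery of Section~\ref{subsec3.2}, and match the result with the kernel description of $\Hom_{\mathbb{K}\mathcal{C}}(\Pi(Q,X)_1, M)$ provided by Lemma~\ref{lem4.1}; the tensor statement will then follow by a dual version of the same strategy.

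First, I would apply $\Hom_{\mathbb{K}\mathcal{C}}(-,\mathbb{K}\mathcal{C})$ to the projective resolution $0 \to P_1 \xrightarrow{d} P_0 \xrightarrow{\mu} M \to 0$ of Lemma~\ref{lem3.2}. The tensor-hom adjunction identifies $\Hom_{\mathbb{K}\mathcal{C}}(P_0,\mathbb{K}\mathcal{C})$ with $\bigoplus_{i \in Q_0}\Hom_{A_i}(M_i, e_i\mathbb{K}\mathcal{C})$ and $\Hom_{\mathbb{K}\mathcal{C}}(P_1,\mathbb{K}\mathcal{C})$ with $\bigoplus_{\alpha \in Q_1}\Hom_{A_{t(\alpha)}}(\mathbb{K}X(\alpha) \otimes_{A_{s(\alpha)}} M_{s(\alpha)}, e_{t(\alpha)}\mathbb{K}\mathcal{C})$. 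A direct check from the explicit formula for $d$ should then show that the induced differential $d^{\vee}$ coincides with the map $\Psi$ from Subsection~\ref{subsec3.2}. By definition of the transpose and of the Auslander--Reiten translate, this yields $\tau(M) \cong \Ker(D(\Psi))$.

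Second, Proposition~\ref{lem3.10} comes into play: the trace-map isomorphisms $\mathfrak{t}$ together with the adjunctions $\mathrm{ad}_\alpha$ identify $D(\Psi)$ with $\Phi$, so $\tau(M) \cong \Ker(\Phi)$. Lemma~\ref{lem4.1} provides $\Hom_{\mathbb{K}\mathcal{C}}(\Pi(Q,X)_1, M) \cong \Ker(\Phi)$, and composing gives the first functorial isomorphism; naturality in $M$ is automatic because every identification in the chain is natural.

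For the tensor isomorphism $\Pi(Q,X)_1 \otimes_{\mathbb{K}\mathcal{C}} M \cong \tau^-(M)$, I would mirror the argument with an injective coresolution, which exists because $\injdim(M) \leq 1$ by Proposition~\ref{pro3.3}. Tensoring the presentation of $\Pi(Q,X)_1$ in Proposition~\ref{pro2.6}(1) with $M$ yields a cokernel description that should match the cokernel computation of $\tau^-(M)$ via an analogue of $\Phi$; equivalently, one may pass through the $\mathbb{K}$-duality $D$ (which interchanges $\tau$ and $\tau^-$ and sends $\Pi(Q,X)_1 \otimes_{\mathbb{K}\mathcal{C}} -$ to $\Hom_{\mathbb{K}\mathcal{C}^{\mathrm{op}}}(\Pi(Q,X)_1, D(-))$) and deduce the tensor statement by applying the first isomorphism over the opposite category algebra. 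The main obstacle I anticipate is the identification $d^{\vee} = \Psi$, a bookkeeping task that is routine in principle but easy to muddle when combining the tensor-hom adjunction, the dual-biset identifications (\ref{iso1}) and (\ref{equ3.5}), and the trace-map formulas; organizing the tensor statement so that the cokernel calculation matches cleanly with $\tau^-$ (without invoking opposite-algebra gymnastics) is the secondary sticking point.
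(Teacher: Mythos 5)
Your treatment of the first isomorphism is essentially the paper's own proof: both arguments compute $\tau(M)$ as the kernel of the Nakayama functor applied to the differential $d$ of the projective resolution in Lemma~\ref{lem3.2}, identify $\nu(d)$ with $D(\Psi)$ via the adjunction isomorphisms, pass from $D(\Psi)$ to $\Phi$ through the trace-map diagram of Proposition~\ref{lem3.10}, and conclude with Lemma~\ref{lem4.1}; your identification of the induced differential with $\Psi$ is exactly the bottom square of the paper's diagram. Where you genuinely diverge is the tensor statement. The paper builds no injective copresentation and never passes to the opposite algebra: it observes that $\projdim(D(\mathbb{K}\mathcal{C}))\le 1$, so $D(\mathbb{K}\mathcal{C})$ is itself locally projective by Proposition~\ref{pro3.3} and the first isomorphism applies to it, giving $\tau(D(\mathbb{K}\mathcal{C}))\cong\Hom_{\mathbb{K}\mathcal{C}}(\Pi(Q,X)_1,D(\mathbb{K}\mathcal{C}))$; combining this with the Auslander--Reiten formula $\Hom_{\mathbb{K}\mathcal{C}}(\tau^-(M),D(\mathbb{K}\mathcal{C}))\cong\Hom_{\mathbb{K}\mathcal{C}}(M,\tau D(\mathbb{K}\mathcal{C}))$ and hom-tensor adjunction yields $D(\tau^-(M))\cong D(\Pi(Q,X)_1\otimes_{\mathbb{K}\mathcal{C}}M)$ naturally, whence the second isomorphism with no new resolutions at all. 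Both of the routes you sketch (an explicit cokernel computation from an injective copresentation matched against the presentation of $\Pi(Q,X)_1$ in Proposition~\ref{pro2.6}(1), or a reduction through $\K$-duality to the opposite category algebra) should succeed, but each obliges you to construct and verify a dual analogue of Lemma~\ref{lem3.2} and of the diagram in Proposition~\ref{lem3.10}, or to track the bimodule conventions across $D$ --- precisely the bookkeeping you flag as the sticking point and which the paper's short adjunction argument sidesteps.
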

\begin{proof}
	For $M\in\mathrm{rep}_{l.p.}(Q,X)$, by Lemma \ref{lem3.2}, we have a projective resolution of $M$,
	\begin{equation}\label{equ4.2.0}0 \to \bigoplus\limits_{\alpha\in Q_1}\mathbb{K}\mathcal{C} e_{t(\alpha)}\otimes_{A_{t(\alpha)}}\mathbb{K}X(\alpha)\otimes_{A_{s(\alpha)}}M_{s(\alpha)} \xrightarrow{d} \bigoplus\limits_{i\in Q_0}\mathbb{K}\mathcal{C} e_i\otimes_{A_i}M_i \xrightarrow{\mu} M \to 0,\end{equation}
	where $d(a\otimes v\otimes m)=a\otimes M_\alpha(v\otimes m)-av\otimes m,a\in \mathbb{K}\mathcal{C} e_{t(\alpha)},v\in \mathbb{K}X(\alpha),m\in M_{s(\alpha)}$. We have $\Ker(\nu(d))\cong\tau(M)$, where $\nu=D\mathrm{Hom}_{\mathbb{K}\mathcal{C}}(-,\mathbb{K}\mathcal{C})$ is the Nakayama functor.
	
	By Proposition~\ref{lem3.10}, we have commutative diagram
	\[\begin{tikzcd}[column sep=small]
		\bigoplus\limits_{\alpha\in Q_1}\mathrm{Hom}_{A_{s(\alpha)}}(\mathbb{K}X(\alpha)^\ast\otimes_{A_{t(\alpha)}}e_{t(\alpha)}\mathbb{K}\mathcal{C},M_{s(\alpha)}) \arrow[r, "\Phi"] \arrow[d, "\cong"]    & \bigoplus\limits_{i\in Q_0}\mathrm{Hom}_{A_i}(e_{i}\mathbb{K}\mathcal{C},M_i) \arrow[dd, "\cong"] \\
		\bigoplus\limits_{\alpha\in Q_1}D\Hom_{A_{s(\alpha)}}(M_{s(\alpha)},\mathbb{K}X(\alpha)^\ast\otimes_{A_{t(\alpha)}}e_{t(\alpha)}\mathbb{K}\mathcal{C}) \arrow[d, "\cong"]                      &                       \\
		\bigoplus\limits_{i\in Q_0}D\Hom_{A_{s(\alpha)}}(\mathbb{K}X(\alpha)^\ast\otimes_{A_{s(\alpha)}}M_{s(\alpha)},e_{t(\alpha)}\mathbb{K}\mathcal{C}) \arrow[r, "D(\Psi)"]                  & \bigoplus\limits_{i\in Q_0}D\mathrm{Hom}_{A_i}(M_i,e_i\mathbb{K}\mathcal{C})                    \\
		\bigoplus\limits_{\alpha\in Q_1}D\mathrm{Hom}_{\mathbb{K}\mathcal{C}}(\mathbb{K}\mathcal{C} e_{t(\alpha)}\otimes_{A_{t(\alpha)}}\mathbb{K}X(\alpha)^\ast\otimes_{A_{s(\alpha)}}M_{s(\alpha)},\mathbb{K}\mathcal{C}) \arrow[u, "\cong"'] \arrow[r, "\nu(d)"] &  \bigoplus\limits_{i\in Q_0}D\mathrm{Hom}_{\mathbb{K}\mathcal{C}}(\mathbb{K}\mathcal{C} e_i\otimes_{A_i}M_i,\mathbb{K}\mathcal{C})         \arrow[u, "\cong"']
	\end{tikzcd}\]
	Here, the two isomorphisms at the bottom are induced by the adjunction isomorphisms. Thus $\tau(M)\cong\Ker(Phi)$. We write $\Pi_1=\Pi(Q,X)_1$ By Lemma~\ref{lem4.1}, we have \begin{equation}\label{equ4.6}\tau(M)\cong\Ker(\Phi)\cong\Hom_{\mathbb{K}\mathcal{C}}(\Pi_1,M).\end{equation}
	
	Let $M\in\mathrm{rep}_{l.p.}(Q,X)$. By Proposition~\ref{pro3.3}, we have $\injdim(M)\le 1$ and $\projdim(D(\mathbb{K}\mathcal{C}))\le 1$. Therefore, we have
	\begin{equation*}\label{equ4.4}
		\begin{aligned}\Hom_{\mathbb{K}\mathcal{C}}(\tau^-(M),D(\mathbb{K}\mathcal{C}))&\cong\Hom_{\mathbb{K}\mathcal{C}}(M,\tau D(\mathbb{K}\mathcal{C}))\\ &\cong\Hom_{\mathbb{K}\mathcal{C}}(M,\Hom_{\mathbb{K}\mathcal{C}}(\Pi_1,D(\mathbb{K}\mathcal{C})))\\ &\cong\Hom_{\mathbb{K}\mathcal{C}}(\Pi_1\otimes_{\mathbb{K}\mathcal{C}} M,D(\mathbb{K}\mathcal{C})).
		\end{aligned}
		\end{equation*}
	Here, the first isomorphism is \cite[Corollary~IV~2.15]{ASS}, the second one is induced by (\ref{equ4.6}) and the last one is the adjunction isomorphism. Since $\Hom_{\mathbb{K}\mathcal{C}}(N,D(\mathbb{K}\mathcal{C}))\cong D(\mathbb{K}\mathcal{C}\otimes_{\mathbb{K}\mathcal{C}}N)\cong D(N)$ for the left $\mathbb{K}\mathcal{C}$-module $N$, we get $\tau^-(M)\cong \Pi_1\otimes_{\mathbb{K}\mathcal{C}} M$. Then we are done. 
\end{proof}

\section{The main result}\label{sec5}  In this section, we prove the main result of this paper; see Theorem~\ref{thm4.3}. Keep the notation of Section~\ref{sec4}.

\begin{pro}\label{pro4.3}
	Let $(Q,X)$ be a finite acyclic EI quiver with an action-free assignment $X$ and let $\mathcal{C}=\mathcal{C}(Q,X)$. Then we have a $\mathbb{K}\mathcal{C}$-$\mathbb{K}\mathcal{C}$-bimodule isomorphism $\Pi(Q,X)_1\cong\Ext_{\mathbb{K}\mathcal{C}}^1(D(\mathbb{K}\mathcal{C}),\mathbb{K}\mathcal{C})$. 
\end{pro}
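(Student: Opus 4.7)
The plan is to specialize Proposition~\ref{pro4.2} to the module $M=D(\mathbb{K}\mathcal{C})$ and then dualize. Since $D(\mathbb{K}\mathcal{C})$ is an injective left $\mathbb{K}\mathcal{C}$-module, we have $\injdim D(\mathbb{K}\mathcal{C})=0\leq 1$, so by Proposition~\ref{pro3.3} it lies in $\mathrm{rep}_{l.p.}(Q,X)$. Hence Proposition~\ref{pro4.2} yields a natural isomorphism
\[\Hom_{\mathbb{K}\mathcal{C}}(\Pi(Q,X)_1,D(\mathbb{K}\mathcal{C}))\cong \tau D(\mathbb{K}\mathcal{C}).\]

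I would then identify both sides. On the left-hand side, the standard adjunction $\Hom_{\mathbb{K}\mathcal{C}}(N,D(\mathbb{K}\mathcal{C}))\cong D(N)$, valid for any $\mathbb{K}\mathcal{C}$-bimodule $N$ and yielding an isomorphism of $\mathbb{K}\mathcal{C}$-$\mathbb{K}\mathcal{C}$-bimodules, gives $\Hom_{\mathbb{K}\mathcal{C}}(\Pi(Q,X)_1,D(\mathbb{K}\mathcal{C}))\cong D(\Pi(Q,X)_1)$. On the right-hand side, for $M\in\mathrm{rep}_{l.p.}(Q,X)$ with projective resolution $0\to P_1\xrightarrow{d} P_0\to M\to 0$, one has $\Ext^1_{\mathbb{K}\mathcal{C}}(M,\mathbb{K}\mathcal{C})=\mathrm{coker}(d^\ast)$, where $d^\ast=\Hom_{\mathbb{K}\mathcal{C}}(d,\mathbb{K}\mathcal{C})$, and $\tau M=\Ker(\nu(d))=\Ker(D(d^\ast))\cong D\,\mathrm{coker}(d^\ast)$; hence $\tau M\cong D\Ext^1_{\mathbb{K}\mathcal{C}}(M,\mathbb{K}\mathcal{C})$. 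This identification is already invoked inside the proof of Proposition~\ref{pro4.2}. Applied to $M=D(\mathbb{K}\mathcal{C})$ it yields $\tau D(\mathbb{K}\mathcal{C})\cong D\Ext^1_{\mathbb{K}\mathcal{C}}(D(\mathbb{K}\mathcal{C}),\mathbb{K}\mathcal{C})$. Combining, $D(\Pi(Q,X)_1)\cong D\Ext^1_{\mathbb{K}\mathcal{C}}(D(\mathbb{K}\mathcal{C}),\mathbb{K}\mathcal{C})$, and one more application of the $\mathbb{K}$-duality delivers the claimed bimodule isomorphism.

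The main obstacle will be tracking the $\mathbb{K}\mathcal{C}$-$\mathbb{K}\mathcal{C}$-bimodule structures carefully. Proposition~\ref{pro4.2} states a left $\mathbb{K}\mathcal{C}$-module isomorphism functorial in $M$, which automatically endows both sides with right $\mathbb{K}\mathcal{C}$-actions whenever $M$ carries a commuting right $\mathbb{K}\mathcal{C}$-action; however, one must still verify that the adjunction $\Hom_{\mathbb{K}\mathcal{C}}(\Pi(Q,X)_1,D(\mathbb{K}\mathcal{C}))\cong D(\Pi(Q,X)_1)$ intertwines the right action inherited from $D(\mathbb{K}\mathcal{C})$ with the right action on $D(\Pi(Q,X)_1)$ coming from the left action on $\Pi(Q,X)_1$, and that the transpose formula $\tau M\cong D\Ext^1_{\mathbb{K}\mathcal{C}}(M,\mathbb{K}\mathcal{C})$ is natural with respect to bimodule structure. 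This bookkeeping is routine but delicate; apart from it, the proof reduces to a direct specialization of Proposition~\ref{pro4.2}.
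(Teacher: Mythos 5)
Your proof is correct, but it takes a genuinely different route from the paper's. You specialize the first isomorphism of Proposition~\ref{pro4.2} at the single module $M=D(\mathbb{K}\mathcal{C})$ (legitimately in $\mathrm{rep}_{l.p.}(Q,X)$ by Proposition~\ref{pro3.3}, since $\injdim D(\mathbb{K}\mathcal{C})=0$), identify both sides via $\Hom_{\mathbb{K}\mathcal{C}}(N,D(\mathbb{K}\mathcal{C}))\cong D(N)$ and $\tau\cong D\Ext^1_{\mathbb{K}\mathcal{C}}(-,\mathbb{K}\mathcal{C})$ on $\mathcal{P}^{\le 1}$, and undualize; the right-module structures are recovered from the functoriality of Proposition~\ref{pro4.2} applied to the endomorphisms of $D(\mathbb{K}\mathcal{C})$ given by right multiplication. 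The paper instead uses the \emph{second} isomorphism of Proposition~\ref{pro4.2}: it identifies $\Pi(Q,X)_1\otimes_{\mathbb{K}\mathcal{C}}M\cong\tau^-(M)\cong\Ext^1_{\mathbb{K}\mathcal{C}}(D(\mathbb{K}\mathcal{C}),M)$ for locally projective $M$, extends this to a natural isomorphism of the two right-exact functors on all of $\mathrm{rep}(Q,X)$ via a projective presentation, and then invokes $\Ext^1_{\mathbb{K}\mathcal{C}}(D(\mathbb{K}\mathcal{C}),-)\cong\Ext^1_{\mathbb{K}\mathcal{C}}(D(\mathbb{K}\mathcal{C}),\mathbb{K}\mathcal{C})\otimes_{\mathbb{K}\mathcal{C}}-$ (a Watts-type theorem) to read off the bimodule isomorphism. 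Your argument is shorter and avoids the Watts-theorem machinery entirely, at the price of the bimodule bookkeeping you correctly flag (naturality of the adjunction and of $\tau\cong D\Ext^1(-,\mathbb{K}\mathcal{C})$ with respect to the induced right actions); that bookkeeping does go through, since all modules involved are finite-dimensional so the double $\mathbb{K}$-dual is harmless. The paper's longer route buys the stronger statement that $\Pi(Q,X)_1\otimes_{\mathbb{K}\mathcal{C}}-$ and $\Ext^1_{\mathbb{K}\mathcal{C}}(D(\mathbb{K}\mathcal{C}),-)$ are naturally isomorphic on all representations, not just an isomorphism of the bimodules themselves.
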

\begin{proof}
	We write $\Lambda=\mathbb{K}\mathcal{C}$ and $\Pi_1=\Pi(Q,X)_1$. The proof method is the same as in \cite[Theorem 10.5]{GLS}. By the proposition  \ref{pro4.2}, for $M\in\mathrm{rep}_{l.p.}(Q,X)$, we have a functorial isomorphism
	\begin{equation}\label{equ4.5}
		\Pi_1\otimes_\Lambda M\cong\tau^-(M)\cong\Ext_\Lambda^1(D(\Lambda),M).
	\end{equation}
	Since $\projdim(D(\Lambda))\le 1$, we have a right exact functor
	\[\mathrm{Ext}_\Lambda^1(D(\Lambda),-):\mathrm{rep}(Q,X)\rightarrow\mathrm{rep}(Q,X).\]
	By \cite[Corollary~5.47]{Rot}, we have $\Ext_\Lambda^1(D(\Lambda),-)\cong\Ext_\Lambda^1(D(\Lambda),\Lambda)\otimes_\Lambda-$. 
	
	For $N\in\mathrm{rep}(Q,X)$, let
	\begin{equation}\label{equ4.3}P_1\rightarrow P_0\rightarrow N\rightarrow 0\end{equation}
	be a projective presentation of $N$. Applying the right exact functors $\Pi_1\otimes_\Lambda-$ and $\Ext_\Lambda^1(D(\Lambda),-)$ to (\ref{equ4.3}) yields a functorial commutative diagram
	\[\begin{tikzcd}
		\Pi_1\otimes_\Lambda P_1 \arrow[r] \arrow[d, "\eta_{P_1}"] & \Pi_1\otimes_\Lambda P_0 \arrow[r] \arrow[d, "\eta_{P_0}"] & \Pi_1\otimes_\Lambda N\arrow[r] \arrow[d, "\eta_N"] & 0 \\
		\Ext_\Lambda^1(D(\Lambda),P_1) \arrow[r]                         & \Ext_\Lambda^1(D(\Lambda),P_0) \arrow[r]                         & \Ext_\Lambda^1(D(\Lambda),N) \arrow[r]                     & 0
	\end{tikzcd}\]
	with exact rows. Since the functorial isomorphism (\ref{equ4.5}), we obtain that $\eta_{P_0}$ and $\eta_{P_1}$ are isomorphisms. Then $\eta_N$ is also an isomorphism. It follows that the functor $\Pi_1\otimes_\Lambda-$ is naturally isomorphic to $\Ext_\Lambda(D(\Lambda),-)$. Then $\Pi_1\otimes_\Lambda-$ is naturally isomorphic to $\Ext_\Lambda^1(D(\Lambda),\Lambda)\otimes_\Lambda-$.  Therefore we have $\Pi_1\cong\Ext_\Lambda(D(\Lambda),\Lambda)$ as $\Lambda$-$\Lambda$-bimodules.
\end{proof}

\begin{thm}\label{thm4.3}
	Let $\mathcal{C}=\mathcal{C}(Q,X)$ be the category associated to a finite acyclic EI quiver $(Q,X)$ such that $X$ is action-free. Then we have an algebra isomorphism $\Pi(Q,X)\cong T_{\mathbb{K}\mathcal{C}}(\Ext_{\mathbb{K}\mathcal{C}}^1(D(\mathbb{K}\mathcal{C}),\mathbb{K}\mathcal{C}))$.
\end{thm}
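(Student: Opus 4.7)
The plan is to combine Proposition~\ref{pro2.6}(2) with Proposition~\ref{pro4.3}. By Proposition~\ref{pro2.6}(2), there is an algebra isomorphism $\Pi(Q,X) \cong T_{\mathbb{K}\mathcal{C}}(\Pi(Q,X)_1)$, where the $\mathbb{K}\mathcal{C}$-bimodule structure on the degree-one component $\Pi(Q,X)_1$ arises via the identification $\mathbb{K}\mathcal{C} \xrightarrow{\sim} \Pi(Q,X)_0$ from Lemma~\ref{lem2.6}. By Proposition~\ref{pro4.3}, this bimodule is isomorphic to $\Ext_{\mathbb{K}\mathcal{C}}^1(D(\mathbb{K}\mathcal{C}),\mathbb{K}\mathcal{C})$.

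Next I would invoke functoriality of the tensor algebra construction: given any isomorphism $V \xrightarrow{\sim} V'$ of $R$-$R$-bimodules, the universal property of $T_R(-)$ produces a canonical algebra isomorphism $T_R(V) \xrightarrow{\sim} T_R(V')$. Applying this to the bimodule isomorphism of Proposition~\ref{pro4.3} with $R = \mathbb{K}\mathcal{C}$, one obtains
\[
T_{\mathbb{K}\mathcal{C}}(\Pi(Q,X)_1) \;\cong\; T_{\mathbb{K}\mathcal{C}}\bigl(\Ext_{\mathbb{K}\mathcal{C}}^1(D(\mathbb{K}\mathcal{C}),\mathbb{K}\mathcal{C})\bigr),
\]
and composing with the isomorphism from Proposition~\ref{pro2.6}(2) delivers the claim.

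At the level of the theorem itself there is no real obstacle; the substantive work is already performed upstream. Proposition~\ref{pro2.6}(2) rests on the graded bookkeeping of Lemma~\ref{lem2.5}, and Proposition~\ref{pro4.3} is the main technical ingredient, built by identifying the functor $\Pi(Q,X)_1 \otimes_{\mathbb{K}\mathcal{C}}(-)$ with the Auslander--Reiten translation $\tau^-$ on $\mathrm{rep}_{l.p.}(Q,X)$ (Proposition~\ref{pro4.2}) and then comparing with $\Ext_{\mathbb{K}\mathcal{C}}^1(D(\mathbb{K}\mathcal{C}),-)$ via a naturality argument on projective presentations. Once these inputs are in hand, the present theorem is a purely formal consequence, and no further calculation is required.
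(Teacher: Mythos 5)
Your proposal is correct and follows exactly the paper's own argument: it combines Proposition~\ref{pro2.6}(2) with the bimodule isomorphism of Proposition~\ref{pro4.3} and invokes functoriality of $T_{\mathbb{K}\mathcal{C}}(-)$ to conclude. Nothing is missing; the theorem is indeed a formal consequence of those two upstream results.
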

\begin{proof}
	By Proposition~\ref{pro2.6}, we have an algebra isomorphism $$\Pi(Q,X)\cong T_{\mathbb{K}\mathcal{C}}(\Pi(Q,X)_1).$$ By Proposition~\ref{pro4.3}, we get a $\mathbb{K}\mathcal{C}$-$\mathbb{K}\mathcal{C}$-bimodule isomorphism $$\Pi(Q,X)_1\cong\Ext_{\mathbb{K}\mathcal{C}}^1(D(\mathbb{K}\mathcal{C}),\mathbb{K}\mathcal{C}).$$ Therefore, we have an algebra isomorphism 
	$$
	T_{\mathbb{K}\mathcal{C}}(\Pi(Q,X)_1)\cong T_{\mathbb{K}\mathcal{C}}(\Ext_{\mathbb{K}\mathcal{C}}^1(D(\mathbb{K}\mathcal{C}),\mathbb{K}\mathcal{C})).
	$$
	Then we have the isomorphism of algebras $\Pi(Q,X)\cong T_{\mathbb{K}\mathcal{C}}(\Ext_{\mathbb{K}\mathcal{C}}^1(D(\mathbb{K}\mathcal{C}),\mathbb{K}\mathcal{C}))$.
\end{proof}

\section{The algebras associated to Cartan matrices}\label{sec6}
In this section, we recall three algebras, which are associated to symmetrizable generalized Cartan matrices; see \cite{GLS,CW}. For the EI quiver of Cartan type in \cite{CW}, we prove that the corresponding preprojective algebra is isomorphic to the generalized preprojective algebra in \cite{GLS}; see Theorem~\ref{pro6.2}.

Let $n\ge 1$ be a positive integer. A matrix $C=(c_{ij})\in M_n(\mathbb{Z})$ is called a \emph{symmetrizable generalized Cartan matrix}, if the following conditions hold:
\begin{enumerate}
	\item[(C1)] $c_{ii}=2$ for all $i$;
	\item[(C2)] $c_{ij}\leq 0$ for all $i\neq j$, and $c_{ij}<0$ if and only if $c_{ji}<0$;
	\item[(C3)] There is a diagonal  matrix $D={\rm diag}(c_1,\cdots,c_n)$  with $c_i\in  \mathbb{Z}_{\geq 1}$ for all $i$ such that the product matrix $DC$ is symmetric.
\end{enumerate}
The matrix $D$ appearing in (C3) is called a \emph{symmetrizer} of $C$. To be concise, a symmetrizable generalized Cartan matrix will be called a Cartan matrix. For all $c_{ij}<0$, let
\[g_{ij}:=|\gcd(c_{ij},c_{ji})|,\mbox{ and } f_{ij}:=\frac{|c_{ij}|}{g_{ij}}.\]

Let $C=(c_{ij})$ be a Cartan matrix. An \emph{orientation} of $C$ is a subset $\Omega\subset \{1,2,\cdots,n\}\times \{1,2,\cdots,n\}$ such that the following conditions are satisfied:
\begin{enumerate}
	\item[(O1)] $\{(i,j),(j,i)\}\cap \Omega\neq \emptyset$ if and only if $c_{ij}<0$;
	\item[(O2)] For each sequence $(i_1,i_2,\cdots,i_t,i_{t+1})$ with $t\geq 1$ and $(i_s,i_{s+1})\in \Omega$ for all $1\leq s\leq t$,  we have $i_1\neq i_{t+1}$.
\end{enumerate}

Let $(C,D,\Omega)$ be a \emph{Cartan triple}, that is, $C$ is a Cartan matrix, $D$ its symmetrizer and $\Omega$ an orientation of $C$. In what follows, we recall a finite free EI category $\mathcal{C}(C, D, \Omega)$ \cite{CW}, a finite dimensional algebra $H(C, D, \Omega)$ and a generalized preprojective algebra $\Pi(C,D,\Omega)$ \cite{GLS}.

Let $Q=Q(C,\Omega)$ be the finite quiver with the set of vertices $Q_0=\{1,2, \cdots,n\}$ and with the set of arrows
\[Q_1=\{\alpha^{(g)}_{ij}\colon j\rightarrow i\mid(i,j)\in \Omega, 1\leq g\leq g_{ij}\}\cup\{\varepsilon_i\colon i\rightarrow i\mid 1\leq i\leq n\}.\]

Let $Q^{\circ}=Q^{\circ}(C,\Omega)$ be the quiver obtained from $Q$ by deleting all the loops $\varepsilon_i$. By the condition (O2), the finite quiver $Q^{\circ}$ is acyclic. The finite EI quiver $(Q^\circ, X)$ is defined in \cite[Subsection~4.1]{CW}.  The assignment $X$ is given as follows: $X(i)=\langle \eta_i\; |\; \eta_i^{c_i}=1\rangle$ is a cyclic group of order $c_i$; for each $(i, j)\in \Omega$, let $G_{ij}=\langle \eta_{ij}\; |\; \eta_{ij}^{g_{ij}}=1\rangle$ be a cyclic group of order $g_{ij}$. We have the following injective group homomorphisms
\[G_{ij}\hookrightarrow X(i), \  \eta_{ij}\mapsto \eta_i^{\frac{c_i}{g_{ij}}},\]
and
\[G_{ij}\hookrightarrow X(j), \  \eta_{ij}\mapsto \eta_j^{\frac{c_j}{g_{ij}}}.\]
Then $X(i)\times_{G_{ij}} X(j)$ is a $(X(i), X(j))$-biset. We set
\begin{equation}\label{equ6.0}X(\alpha_{ij}^{(g)})=X(i)\times_{G_{ij}} X(j),\end{equation}
for each $1\leq g\leq g_{ij}$. We obtain a finite EI quiver $(Q^\circ,X)$. Denote by $\mathcal{C}(C,D,\Omega)$ the free EI category $\mathcal{C}(Q^\circ,X)$.

The algebra $H(C,D,\Omega)$ is defined as $\mathbb{K}Q/I$ \cite{GLS}, where $\mathbb{K}Q$ is the path algebra of $Q=Q(C,\Omega)$, and $I$ is the two-sided ideal of $\mathbb{K}Q$ defined by  the following relations 
$$\varepsilon_k^{c_k}=0, \; \varepsilon_i^{f_{ji}} \alpha^{(g)}_{ij}=\alpha^{(g)}_{ij}\varepsilon_j^{f_{ij}},
$$
for all $k\in Q_0,\;(i,j)\in \Omega,\;1\leq g\leq g_{ij}.$


Let $(C,D,\Omega)$ be a Cartan triple and $Q=Q(C,\Omega)$. The \emph{oppsite orientation} of $\Omega$ is $\Omega^\ast=\{(j,i)|(i,j)\in\Omega\}$. Let $\overline{\Omega}=\Omega\sqcup\Omega^\ast$ and 
\[\overline{\Omega}(-,i)=\{i\in Q_0\;|\;(j,i)\in\overline{\Omega}\},\;\sgn(i,j)=\begin{cases}
	1,&\mbox{if }(i,j)\in\Omega;\\
	-1,&\mbox{if }(i,j)\in \Omega^\ast.
\end{cases}\]
The quiver $\widetilde{Q}=\widetilde{Q}(C,\Omega)$ is obtained from $Q$ by adding a new arrows $\alpha_{ji}^{(g)}:i\rightarrow j$ for each arrow $\alpha_{ij}^{(g)}:j\rightarrow i$ of $Q^\circ$.

\begin{dfn}{ (\cite[Section~1.4]{GLS})}
	The \emph{preprojective algebra} $\Pi(C,D,\Omega)$ of type $C$ is defined as the $\mathbb{K}$-algebra $\mathbb{K}\widetilde{Q}/\widetilde{I}$,  
	where $\widetilde{I}$ is the two-sided ideal of the path algebra $\mathbb{K}\widetilde{Q}$ defined by the following relations:
	\[\varepsilon_k^{c_k}=0,\;\varepsilon_i^{f_{ji}}\alpha_{ij}^{(g)}=\alpha_{ij}^{(g)}\varepsilon_j^{f_{ij}},\]
	for all $k\in Q_0$, $(i,j)\in\overline{\Omega}$ and $1\le g\le g_{ij}$; 
	\[\sum\limits_{j\in\overline{\Omega}(-,i)}\sum\limits_{g=1}^{g_{ji}}\sum\limits_{f=0}^{f_{ji}-1}\sgn(i,j)\varepsilon_i^f\alpha_{ij}^{(g)}\alpha_{ji}^{(g)}\varepsilon_i^{f_{ji}-1-f}=0,\]
	for all $i\in Q_0$.
\end{dfn}

Following \cite[Subsection~4.2]{CW}, there is a construction of a new Cartan triple $(C',D',\Omega')$ from a given Cartan triple $(C,D,\Omega)$ as follows. We recall that $C=(c_{ij})\in M_n(\mathbb{Z})$ and $D={\rm diag}(c_1, \cdots, c_n)$.

\vskip 5pt

{\bf{Case 1.}} Assume that ${\rm char}(\K)=p>0$. Let $c_i=p^{r_i}d_i$ with $r_i\geq 0$ and ${\rm gcd}(p, d_i)=1$. For each $1\leq i, j\leq n$, let
\[\Sigma_{ij}=\{ (l_i,l_j)\; |\;  0\leq l_i< d_i, 0\leq l_j< d_j, l_ip^{r_i}\equiv l_jp^{r_j} ({\rm mod}\ {\rm gcd}(d_i, d_j))\}.\]
Let
$$M=\bigsqcup_{1\leq i\leq n} \{(i, l_i)\; |\; 0\leq l_i<d_i\}.$$ 
The Cartan matrix $C'$, whose rows and columns are indexed by the set $M$, is given by:
\[c'_{(i, l_i), (j, l_j)}=\begin{cases}
	2,&\mbox{if }(i,l_i)=(j,l_j);\\
	-{\rm gcd}(c_{ij},c_{ji}) p^{r_j-{\rm min}(r_i, r_j)}, & \mbox{if }(i,l_i)\neq (j,l_j)\mbox{ and } (l_i, l_j)\in \Sigma_{ij}; \\
	0, & \mbox{otherwise.}
\end{cases}\]
Let $D'$  be a diagonal matrix, whose $(i, l_i)$-th component is given by $p^{r_i}$.  Set
$$\Omega'=\{((i, l_i),(j, l_j))\; |\; (i, j)\in \Omega, (l_i, l_j)\in \Sigma_{ij}\},$$
Then $(C',D',\Omega')$ is a Cartan triple.

{\bf{Case 2.}} If $\mathrm{char}(\K)=0$, we put $r_i=0$ and $d_i=c_i$. 
 We define the index set $M$ and the matrix $C'$ as above, which is symmetric. The matrix $D'$ is the identity matrix.

We say that $\K$ has enough roots of unity for $D$, if for each $1\leq i\leq n$, the polynomial $t^{c_i}-1$ splits in $\K[t]$. Based on \cite{CW}, we have the following result.
\begin{thm}\label{pro6.2}
	Assume that $(C, D, \Omega)$ is a Cartan triple and that the field $\mathbb{K}$ has enough roots of unity for $D$. Keep the notation in the above construction, and let $(Q^\circ,X)$ be the EI quiver associated to the Cartan triple $(C,D,\Omega)$.  Then there is an isomorphism of algebras
	\[\Pi(Q^\circ,X)\cong\Pi(C',D',\Omega').\]
\end{thm}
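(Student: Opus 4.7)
The strategy is to chain together three tensor-algebra descriptions. First, I verify that the assignment $X$ produced by the construction in Section~\ref{sec6} is action-free, so that Theorem~\ref{thm4.3} applies to $(Q^\circ,X)$. For each $(i,j)\in\Omega$ and $1\le g\le g_{ij}$, we have $X(\alpha_{ij}^{(g)})=X(i)\times_{G_{ij}}X(j)$. The left $X(i)$-action is $h\cdot(x,y)=(hx,y)$: if $(hx,y)\sim(x,y)$ then there is $g\in G_{ij}$ with $hx=xg$ and $g^{-1}y=y$, forcing $g=1$ and hence $h=1$, since $X(j)$ acts freely on itself by left multiplication. The right action is handled symmetrically. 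Thus $X$ is action-free.

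Next, I invoke Theorem~\ref{thm4.3} to obtain an algebra isomorphism
\[\Pi(Q^\circ,X)\;\cong\;T_{\mathbb{K}\mathcal{C}}\bigl(\mathrm{Ext}_{\mathbb{K}\mathcal{C}}^1(D(\mathbb{K}\mathcal{C}),\mathbb{K}\mathcal{C})\bigr),\]
where $\mathcal{C}=\mathcal{C}(Q^\circ,X)=\mathcal{C}(C,D,\Omega)$. Then I transport this along the isomorphism supplied by \cite{CW}, which, under the hypothesis that $\mathbb{K}$ has enough roots of unity for $D$, identifies $\mathbb{K}\mathcal{C}$ with $H(C',D',\Omega')$. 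Writing $H'=H(C',D',\Omega')$, this identification carries the bimodule $\mathrm{Ext}_{\mathbb{K}\mathcal{C}}^1(D(\mathbb{K}\mathcal{C}),\mathbb{K}\mathcal{C})$ to $\mathrm{Ext}_{H'}^1(D(H'),H')$, because Ext and $\mathbb{K}$-duality are transported naturally along any algebra isomorphism. Consequently,
\[\Pi(Q^\circ,X)\;\cong\;T_{H'}\bigl(\mathrm{Ext}_{H'}^1(D(H'),H')\bigr).\]

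The last ingredient is the symmetrizable analogue of Ringel's theorem for the generalized preprojective algebra, namely the fact from \cite{GLS} that for any Cartan triple $(C',D',\Omega')$ one has
\[\Pi(C',D',\Omega')\;\cong\;T_{H'}\bigl(\mathrm{Ext}_{H'}^1(D(H'),H')\bigr).\]
Applying this to $(C',D',\Omega')$ (the construction ensures this is a genuine Cartan triple) gives the desired conclusion $\Pi(Q^\circ,X)\cong\Pi(C',D',\Omega')$ by concatenating the three isomorphisms.

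The main obstacle I expect is making the transport step precise: one needs to check that the CW isomorphism $\mathbb{K}\mathcal{C}\xrightarrow{\sim}H'$ really does carry the bimodule $\mathrm{Ext}^1_{\mathbb{K}\mathcal{C}}(D(\mathbb{K}\mathcal{C}),\mathbb{K}\mathcal{C})$ identified in Proposition~\ref{pro4.3} to the specific $H'$-$H'$-bimodule appearing on the GLS side. In principle this is automatic from functoriality of $\mathrm{Ext}$ and $D$, but some care is required because the GLS description of the bimodule via the quiver $\widetilde{Q}(C',\Omega')$ uses a particular basis of arrows $\alpha_{ji}^{(g)}$ and relations $\varepsilon_k^{c_k'}=0$, $\varepsilon_i^{f_{ji}'}\alpha_{ij}^{(g)}=\alpha_{ij}^{(g)}\varepsilon_j^{f_{ij}'}$, plus the mesh relations; one must match these presentations with the one obtained on the EI-quiver side by unwinding Proposition~\ref{pro2.6} and the CW isomorphism on generators. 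Once this matching is pinned down, the three isomorphisms compose to the stated one.
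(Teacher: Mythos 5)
Your proposal is correct and follows essentially the same route as the paper: apply Theorem~\ref{thm4.3} to $(Q^\circ,X)$ after checking $X$ is action-free, transport $\Ext^1_{\mathbb{K}\mathcal{C}}(D(\mathbb{K}\mathcal{C}),\mathbb{K}\mathcal{C})$ along the isomorphism $\mathbb{K}\mathcal{C}\cong H(C',D',\Omega')$ of \cite{CW}, and conclude with \cite[Theorem~1.6]{GLS}. Your explicit verification of action-freeness is a welcome detail the paper only asserts, and your worry about matching presentations is indeed unnecessary since both sides use the intrinsic bimodule $\Ext^1_{H'}(D(H'),H')$, so functoriality suffices exactly as the paper's proof does.
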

\begin{proof}
	We write $H=H(C',D',\Omega')$ and $\Lambda=\mathbb{K}\mathcal{C}(C,D,\Omega)$. By \cite[Theorem~4.3]{CW}, we have an isomorphism of algebras $\Theta:\Lambda\rightarrow H$. Then $H$ is a $\Lambda$-$\Lambda$-bimodule which is induced by $\Theta$ and $\Theta$ becomes an isomorphism of $\Lambda$-$\Lambda$-bimodules. Therefore, we have an isomorphism of $\Lambda$-$\Lambda$-bimodules
	\begin{equation*}\Ext_{\Lambda}^1(D(\Lambda),\Lambda)\cong\Ext_\Lambda^1(D(H),H)\cong\Ext_H^1(D(H),H).\end{equation*}
	Here, the isomorphism on the right side uses the isomorphism between $\Lambda$-$\mathrm{mod}$ and $H$-$\mathrm{mod}$, and the $\Lambda$-$\Lambda$-bimodule structure on $\Ext_H^1(D(H),H)$ is induced by the algebra isomorphism $\Theta$. Then we have an algebra isomorphism 
	\begin{equation}\label{equ6.1}
	T_\Lambda(\Ext_{\Lambda}^1(D(\Lambda),\Lambda))\cong T_H(\Ext_H^1(D(H),H)).
	\end{equation}
	 By \cite[Theorem~1.6]{GLS}, we have an isomorphism of algebras
	\begin{equation}\label{equ6.2} \Pi(C',D',\Omega')\cong T_H(\Ext_H^1(D(H),H)).\end{equation}
In view of (\ref{equ6.0}), we observe that the assignment $X$ is action-free. By Theorem~\ref{thm4.3}, we get 
	\begin{equation}\label{equ6.3}\Pi(Q^\circ,X)\cong 	T_\Lambda(\Ext_{\Lambda}^1(D(\Lambda),\Lambda).\end{equation} Then combining  (\ref{equ6.1}), (\ref{equ6.2}) and (\ref{equ6.3}), we obtain the required isomorphism.
\end{proof}
\begin{cor}\label{cor6.3}
	Assume that the field $\K$ has characteristic $p>0$ and that $(C,D,\Omega)$ is a Cartan triple such that each $c_i$ is a $p$-power. Let $(Q^\circ,X)$ be the EI quiver associated to the Cartan triple $(C,D,\Omega)$. Then there is a $\K$-algebra isomorphism 
	\[\Pi(Q^\circ,X)\cong\Pi(C,D,\Omega).\]
\end{cor}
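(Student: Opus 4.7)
The plan is to deduce Corollary~\ref{cor6.3} as a direct specialization of Theorem~\ref{pro6.2}. I would verify two things: first, that the hypotheses of Theorem~\ref{pro6.2} are met; and second, that under the stated assumption (characteristic $p$ with each $c_i$ a $p$-power), the Cartan triple $(C',D',\Omega')$ produced by the Case~1 construction coincides with the original triple $(C,D,\Omega)$.

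For the first point, I would note that in characteristic $p$ we have the identity $t^{p^r}-1=(t-1)^{p^r}$ in $\K[t]$, so whenever $c_i=p^{r_i}$ the polynomial $t^{c_i}-1$ splits in $\K[t]$. Hence $\K$ has enough roots of unity for $D$, and Theorem~\ref{pro6.2} applies to give $\Pi(Q^\circ,X)\cong\Pi(C',D',\Omega')$.

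For the second point, I would unwind the Case~1 construction with $d_i=1$ and $r_i$ defined by $c_i=p^{r_i}$. Then the index set $M=\{(i,l_i)\mid 0\le l_i<d_i\}$ reduces to $\{(i,0)\mid 1\le i\le n\}$, which I identify with $\{1,\dots,n\}$. The congruence condition defining $\Sigma_{ij}$ is trivially satisfied at the unique pair $(0,0)$, so $\Omega'$ is naturally identified with $\Omega$, and $D'=\mathrm{diag}(p^{r_1},\dots,p^{r_n})=D$. It remains to check $c'_{(i,0),(j,0)}=c_{ij}$ for $i\neq j$. Here I would use that symmetrizability $c_i c_{ij}=c_j c_{ji}$ gives $p^{r_i}|c_{ij}|=p^{r_j}|c_{ji}|$; a short case analysis on whether $r_i\le r_j$ or $r_j\le r_i$ then shows that $\gcd(|c_{ij}|,|c_{ji}|)\cdot p^{|r_j-r_i|}$ equals $\max(|c_{ij}|,|c_{ji}|)$, and one checks that the factor $p^{r_j-\min(r_i,r_j)}$ appearing in the definition of $c'_{(i,0),(j,0)}$ precisely restores the absolute value $|c_{ij}|$. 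Thus $c'_{(i,0),(j,0)}=-|c_{ij}|=c_{ij}$.

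Combining the two steps, $\Pi(C',D',\Omega')=\Pi(C,D,\Omega)$, so Theorem~\ref{pro6.2} yields the desired isomorphism. The only non-routine part is the arithmetic verification that the Case~1 formula for $c'$ collapses to $c$ under the $p$-power hypothesis; this is purely a matter of tracking the exponents of $p$, with no conceptual obstacle.
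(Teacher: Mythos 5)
Your proposal is correct and follows exactly the route the paper takes: verify that $\K$ has enough roots of unity for $D$ (via $t^{p^{r_i}}-1=(t-1)^{p^{r_i}}$ in characteristic $p$), check that the Case~1 construction returns $(C',D',\Omega')=(C,D,\Omega)$ under the identification $(i,0)\leftrightarrow i$, and then invoke Theorem~\ref{pro6.2}. The paper simply asserts these two facts without the arithmetic on the exponents of $p$, which you carry out correctly.
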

\begin{proof}
	Since each $c_i$ is a $p$-power, we have $(C',D',\Omega')=(C,D,\Omega)$. Here, we identify $(i,0)$ with $i$. For the same reason, the field $\K$ has enough roots of unity for $D$. Then the required isomorphism follows from Theorem~\ref{pro6.2}.
\end{proof}
\begin{exm}
	Let $\mathrm{char}(\K)=2,C=\begin{pmatrix}
		2&-1\\-2&2
	\end{pmatrix}$ with symmetrizer $D=\mathrm{diag}\{2,1\}$ and $\Omega=\{(1,2)\}$. The finite EI quiver $(Q^\circ ,X)$ is given as follows:
	\[Q^\circ :\begin{tikzcd}
		1 & 2 \arrow[l, "\alpha_{12}"']
	\end{tikzcd},\;X(1)=\left<\eta|\eta^2=1\right>,\;X(2)=\{1\},\;X(\alpha_{12})=\{(1,1), (\eta,1)\}.\]
	Let $\mathcal{C}=\mathcal{C}(Q^\circ,X)$ and $\overline{\mathcal{C}}=\mathcal{C}(\overline{Q^\circ},\overline{X})$.
	The preprojective algebra of the finite EI quiver $(Q^\circ,X)$ is $$\Pi(Q^\circ, X)=\mathbb{K}\overline{\mathcal{C}}/\left<(1,1)(1,1)^\ast+(\eta,1)(\eta,1)^\ast-(1,1)^\ast(1,1)\right>.$$
	
	We have $(C',D',\Omega')=(C,D,\Omega)$. The preprojective algebra $\Pi(C,D,\Omega)$ is given by the quiver
	\[\begin{tikzcd}
		1 \arrow[r, "\alpha_{21}", shift left] \arrow["\varepsilon_1"', loop, distance=2em, in=125, out=55] & 2 \arrow[l, "\alpha_{12}", shift left] \arrow["\varepsilon_2", loop, distance=2em, in=55, out=125]
	\end{tikzcd}\]
	with relations $\varepsilon_1^2=0,\;\varepsilon_2=0,\;\alpha_{12}\alpha_{21}\varepsilon_1+\varepsilon_1\alpha_{12}\alpha_{21}=0$ and $-\alpha_{21}\alpha_{12}=0$.

	By Corollary~\ref{cor6.3}, we have an isomorphism of algebras $\Pi(Q^\circ,X)\cong\Pi(C,D,\Omega)$.
\end{exm}

\vskip 10pt

\noindent {\bf Acknowledgements.}\quad The author is grateful to his supervisor Professor Xiao-Wu Chen for his guidance.

\frenchspacing
\bibliography{}

\vskip 10pt
{\footnotesize \noindent Dongdong Hu\\
	School of Mathematical Sciences, University of Science and Technology of China, Hefei 230026, Anhui, PR China}



\end{document}